\newcommand{\R}{\mathbb R}
\newcommand{\N}{\mathbb N}
\newcommand{\Z}{\mathbb Z}
\newcommand{\p}{\partial}
\newcommand{\ve}{\varepsilon}
\newcommand{\f}{\frac}
\newcommand{\la}{\lambda}
\newcommand{\al}{\alpha}
\renewcommand{\th}{\theta}
\newcommand{\G}{\Gamma}
\renewcommand{\o}{\omega}
\newcommand{\ds}{\displaystyle}
\newcommand{\crit}{\textup{crit}}
\newcommand{\conf}{\textup{conf}}
\theoremstyle{plain}
\newtheorem{theorem}{Theorem}[section]
\newtheorem{lemma}[theorem]{Lemma}
\newtheorem{conclusion}{Theorem}
\theoremstyle{definition}
\newtheorem*{conjecture}{Conjecture}
\newtheorem*{strconjecture}{Strauss Conjecture}
\theoremstyle{remark}
\newtheorem{remark}{Remark}[section]
\numberwithin{equation}{section}
\title{On the global solution problem for semilinear generalized
  Tricomi equations, I}
\author{Daoyin He$^{1, 2,*}$,\qquad Ingo Witt$^{2,*}$, \qquad Huicheng
  Yin$^{3,}$\footnote{He Daoyin
    (\texttt{daoyin.he@mathematik.uni-goettingen.de}) and Yin Huicheng
    (\texttt{huicheng@} \texttt{nju.edu.cn}) are supported by the NSFC
    (No.~11571177) and by the Priority Academic Program Development of
    Jiangsu Higher Education Institutions. Ingo~Witt
    (\texttt{iwitt@mathematik.uni-goettingen.de}) was partly supported
    by the DFG through the Sino-German project ``Analysis of PDEs and
    Applications."  This research was started when Yin Huicheng was
    visiting the Mathematical Institute of the University of
    G\"ottingen in February-March of 2013.}\vspace{0.5cm}\\ \small 1.
  Department of Mathematics and IMS, Nanjing University, Nanjing
  210093, China.\\ \small 2.  Mathematical Institute, University of
  G\"{o}ttingen, Bunsenstr.~3-5, D-37073 G\"{o}ttingen,
  Germany.\\ \small 3.  School of Mathematical Sciences, Nanjing
  Normal University, Nanjing 210023, China.\\}
\begin{document}

\maketitle
\thispagestyle{empty}

\begin{abstract}
In this paper, we are concerned with the global Cauchy problem for the
semilinear generalized Tricomi equation $\partial_t^2 u-t^m \Delta
u=|u|^p$ with initial data $(u(0,\cdot), \p_t u(0,\cdot))=(u_0, u_1)$,
where $t\ge 0$, $x\in\R^n$ ($n\ge 3$), $m\in\N$, $p>1$, and $u_i\in
C_0^{\infty}(\R^n)$ ($i=0,1$). We show that there exists a critical
exponent $p_{\crit}(m,n)>1$ such that the solution $u$, in general,
blows up in finite time when $1<p<p_{\crit}(m,n)$. We further show
that there exists a conformal exponent $p_{\conf}(m,n)>
p_{\crit}(m,n)$ such that the solution $u$ exists globally when
$p>p_{\conf}(m,n)$ provided that the initial data is small enough. In
case $p_{\crit}(m,n)<p\le p_{\conf}(m,n)$, we will establish global
existence of small data solutions $u$ in a subsequent paper \cite{He}.
\end{abstract}

\noindent
\textbf{Keywords.} Generalized Tricomi equation, critical exponent,
conformal exponent, global existence, blowup, Strichartz
estimate.

\noindent
\textbf{2010 Mathematical Subject Classification} 35L70, 35L65, 35L67.


\section{Introduction}
In this paper, we are concerned with the global existence and blowup
of solutions $u$ of the semilinear generalized Tricomi equation
\begin{equation}\label{equ:original}
\left\{ \enspace
\begin{aligned}
&\partial_t^2 u-t^m \Delta u =|u|^p, && (t, x)\in \R^{n+1}_{+},\\
&u(0,\cdot)=u_0(x), \quad \partial_{t} u(0,\cdot)=u_1(x).
\end{aligned}
\right.
\end{equation}
Here, $t\geq 0$, $x=(x_1,\dots, x_n)\in\R^n$ ($n\ge 3$), $m\in\N$,
$p>1$, and $u_i\in C_0^{\infty}(B(0,M))$ $(i=0,1)$, where $B(0,
M)=\{x\colon |x|<M\}$, and $M>0$. In general, one has only weak
solutions of \eqref{equ:original} since the nonlinear term $|u|^p$ is
not $C^2$ when $1<p<2$. For the local existence and regularity of
solutions~$u$ of \eqref{equ:original} under weaker regularity
assumptions on $(u_0, u_1)$, the reader may consult
\cite{Rua1,Rua2,Rua3,Rua4,Yag2} and the references given therein; here
we shall not discuss this problem.

Our present objective is, for given $m\in\N$ and $n\ge 3$, to
determine a critical exponent $p_{\crit}(m,n)>1$ such that solutions
$u$ of $\eqref{equ:original}$ will, in general, blow up in finite time
when $1<p<p_{\crit}(m,n)$ and a conformal exponent
$p_{\conf}(m,n)>p_{\crit}(m,n)$ with the property that small data
solutions $u$ of \eqref{equ:original} exist globally in time when
$p>p_{\conf}(m,n)$. Global existence of small data solutions $u$ of
\eqref{equ:original} for $p$ in the range $p_{\crit}(m,n) < p\leq
p_{\conf}(m,n)$ will be established in a forthcoming paper \cite{He}.

\medskip

Before we describe the content of this paper in detail, we recall a
number of related results. Firstly, we consider the semilinear wave
equation
\begin{equation}\label{equ:wave}
\left\{ \enspace
\begin{aligned}
  &\partial_t^2 u-\Delta u =|u|^p, && (t,x)\in \R^{n+1}_{+}, \\
  &u(0,\cdot)=u_0(x), \quad \partial_{t} u(0,\cdot)=u_1(x),
\end{aligned}
\right.
\end{equation}
where $p>1$, $n\ge 2$, and $u_i\in C_0^{\infty}(\R^n)$ ($i=0, 1$). Let
$p_1(n)$ denote the positive root of the quadratic equation
\begin{equation}\label{1.3}
\left(n-1\right)p^2-\left(n+1\right)p-2=0.
\end{equation}
Strauss \cite{Strauss} made the following conjecture:

\begin{strconjecture}
If $p>p_1(n)$, then small data solutions of problem $\eqref{equ:wave}$
exist globally. If $1<p<p_1(n)$, then small data solutions of problem
$\eqref{equ:wave}$ blow up in finite time.
\end{strconjecture}

For $1<p\leq p_1(n)$ and non-negative initial data $\left(u_0,
u_1\right)$, blowup for the solution $u$ of \eqref{equ:wave} has been
established, while, for $p > p_1(n)$, global existence of small data solution
$u$ of $\eqref{equ:wave}$ has also been systematically studied (see
\cite{Gls1,Gla1, Gla2,Joh,Gls2,Sch,Sid,Yor,Zhou} and the references
therein).  Especially, in \cite{Gls1} and \cite{Gls2}, one finds a
detailed history of results related to the Strauss Conjecture.

\smallskip

Secondly, we consider the semilinear wave equation with time-dependent
dissipation
\begin{equation}\label{equ:eff}
\left\{ \enspace
\begin{aligned}
&\partial_t^2 u-\Delta u +\f{\mu}{(1+t)^{\al}}\,\p_tu=|u|^p, &&
  (t,x)\in \R^{n+1}_{+},\\
&u(0,\cdot)=u_0(x), \quad \partial_{t} u(0,\cdot)=u_1(x),
\end{aligned}
\right.
\end{equation}
where $\mu>0$, $\al\ge 0$, $p>1$, $n\ge 1$, and $u_i\in
C_0^{\infty}(\R^n)$ ($i=0, 1$). Define the Fujita exponent
$p_2(n)=1+\f{2}{n}$ as in \cite{Fuj}. It follows from well-known
results that for the semi-linear heat equation $\p_t u-\Delta u=|u|^p$
with initial data $u(0, \cdot)=u_0(x)$, small data solution $u$
exists globally if $u_0$ is sufficiently small and $p>p_2(n)$;
otherwise, solutions $u$ will, in general, blow up in finite
time.

As for problem \eqref{equ:eff}, the following result has been
established in a series of papers \cite{Rei1, Rei2,Zhai,Nish,Rei3}:

\begin{conclusion}
\begin{enumerate}[{\rm (i)}]
\item For $0\le\al<1$, if $p>p_2(n)$, then $\eqref{equ:eff}$ has a global
small data solution; if $1<p\le p_2(n)$, the solution $u$ of
$\eqref{equ:eff}$ generally blows up in finite time.
\item For $\al>1$, or $\al=1$ and $0<\mu\ll1$, then the properties of
  problem \eqref{equ:eff} are analogous to those of problem
  \eqref{equ:wave}.
\item For $\al=1$ and $\mu\gg1$, then the
  properties of problem \eqref{equ:eff} are analogous to those of the
  semi-linear heat equation $\p_t u-\Delta u=|u|^p$.
\end{enumerate}
\end{conclusion}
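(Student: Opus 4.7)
The plan is to split the analysis according to the three dynamical regimes that the damping coefficient $\mu(1+t)^{-\alpha}$ selects: the effective (parabolic) regime $0\le\alpha<1$, the scattering (wave-like) regime $\alpha>1$ (and $\alpha=1$ with small $\mu$), and the Euler-type (heat-like) overdamped regime $\alpha=1$ with large $\mu$. In each case the strategy is first to extract sharp decay estimates for the linear problem $\partial_t^2u-\Delta u+\mu(1+t)^{-\alpha}\partial_tu=0$, and then to run a contraction-mapping argument in a suitable weighted space for global existence, and a test-function argument (in the spirit of Mitidieri-Pohozaev, Zhang) for blowup. The critical exponent in each regime arises from balancing the linear decay rate against the nonlinearity $|u|^p$.

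For part (i), the damping dominates: multiplying the linear equation by $(1+t)^{\sigma}\partial_tu$ and by $(1+t)^{\sigma}u$ with an appropriately chosen weight $\sigma$, and absorbing lower-order terms, would yield Matsumura-type $L^p$-$L^q$ decay estimates mirroring those of the heat equation under the parabolic scaling $|x|^2\sim(1+t)^{\alpha+1}$; equivalently, a Liouville change of variable reveals an effective parabolic equation. These estimates suffice to close a small-data global existence argument via contraction mapping in a weighted space when $p>p_2(n)=1+2/n$. For blowup when $1<p\le p_2(n)$, I would use a test function of the form $\varphi(t,x)=\eta(t/T)^{2p'}\psi(x/T^{(\alpha+1)/2})$, with $\psi$ approximately solving the adjoint linear problem, which after integration by parts and H\"older's inequality produces an algebraic inequality incompatible with nontrivial nonnegative initial data as $T\to\infty$.

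For part (ii), the substitution $u(t,x)=\exp\bigl(-\tfrac12\int_0^t\mu(1+s)^{-\alpha}ds\bigr)v(t,x)$ removes the damping term modulo a potential; when $\alpha>1$ this potential is integrable in time, so $v$ satisfies a perturbation of the free wave equation, and Klainerman-type vector-field methods (or weighted Strichartz estimates, or the Keel-Smith-Sogge framework) yield global existence for $p>p_1(n)$, while John-Sideris-Yordanov-Zhang type arguments yield blowup for $1<p\le p_1(n)$. The case $\alpha=1$ with $0<\mu\ll 1$ requires only minor modifications, since the substitution introduces the factor $(1+t)^{-\mu/2}$, which for small $\mu$ does not disturb the asymptotic hyperbolic balance. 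For part (iii), the substitution $\tau=\log(1+t)$ converts the equation to constant-coefficient form in $\tau$, and analysis of the characteristic exponents of the associated ODE in Fourier space shows that for $\mu\gg1$ both roots acquire heat-type behavior, pinning the threshold at the Fujita exponent $p_2(n)=1+2/n$.

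The main obstacles are twofold. First, producing sharp linear decay bounds uniformly as $\alpha$ traverses its admissible interval and, in particular, capturing the correct dependence on $\mu$ in the borderline case $\alpha=1$; the required decay is dictated by the nonlinear problem, so the linear estimate must be tight in the appropriate mixed-norm sense. Second, identifying the correct test function and cutoffs in each blowup argument, since the adjoint of the damped wave operator changes character with $\alpha$. The transition point $\alpha=1$ with intermediate $\mu$ is the most delicate, which is why the cited literature typically treats it under explicit smallness or largeness hypotheses on $\mu$ to place itself definitively in one of the two asymptotic regimes.
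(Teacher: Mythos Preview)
The paper does not prove this statement at all: it is explicitly introduced as a summary of results ``established in a series of papers \cite{Rei1, Rei2, Zhai, Nish, Rei3},'' and no argument is given beyond that citation. So there is no proof in the paper to compare your proposal against.

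That said, your sketch is a fair high-level summary of the techniques actually used in the cited literature. The effective-damping case (i) is indeed handled by Matsumura-type weighted energy estimates yielding heat-like decay, combined with a test-function blowup argument; the non-effective cases (ii) and (iii) are treated via the Liouville-type substitution you describe, reducing to perturbations of the free wave equation or extracting the dominant parabolic mode, respectively. Your identification of the borderline $\alpha=1$ with intermediate $\mu$ as the delicate case is also accurate, and is precisely why the paper's subsequent Remark notes that this case remains open. So your proposal is not wrong, but it is addressing a statement that the present paper merely quotes rather than proves.
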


\begin{remark}
Note that for $\al=1$ and $\mu\approx1$, it is still an interesting
open problem to determine explicitly a critical value $p_{c}(n)$ so
that problem \eqref{equ:eff} has global small data solutions for
$p>p_c(n)$, while solutions of \eqref{equ:eff}, in general, blow up in
finite time when $1<p\le p_c(n)$.
\end{remark}

Thirdly, we consider the semilinear generalized Tricomi equation
\begin{equation}\label{equ:Tri}
\left\{ \enspace
\begin{aligned}
&\partial_t^2 u-t^{2k}\Delta u=|u|^p,&& (t,x)\in \R^{n+1}_{+},\\
&u(0,\cdot)=u_0(x), \quad \partial_{t} u(0,\cdot)=u_1(x),
\end{aligned}
\right.
\end{equation}
where $k>\f12$ is a real constant, $p>1$, $n\ge 1$, and $u_i\in
C_0^{\infty}(\R^n)$ $(i=0, 1)$. Note that problems $\eqref{equ:eff}$
and $\eqref{equ:Tri}$ are closely related for large $t>0$. Indeed,
with $T=t^{k+1}/(k+1)$, the equation in $\eqref{equ:Tri}$ becomes
\[
\p_T^2 u-\Delta u+\ds\f{k}{k+1}\f{\p_T u}{T}
=(k+1)^{-\f{2k}{k+1}}T^{-\f{2k}{k+1}}\,|u|^p,
\]
which is essentially the equation
\begin{equation}\label{equ:W}
\p_t^2 u-\Delta u+\f{\mu_k}{1+t}\,\p_t u=C_k(1+t)^{-\f{2k}{k+1}}\,|u|^p
\end{equation}
for large $t>0$ with $\mu_k=\f{k}{k+1}$ and $\ds
C_k=(k+1)^{-\f{2k}{k+1}}$.

Comparing the equations in \eqref{equ:eff} and \eqref{equ:W}, one
realizes that their linear parts are identical. Note that the
coefficient $\mu_k$ in \eqref{equ:W} can be arbitrarily close to $1$
when $k$ is large. In this case, however, it is unknown what the
critical value of the exponent $p$ for problem \eqref{equ:eff} is.
This especially means that the methods of \cite{Rei1, Rei2, Zhai,
  Nish, Rei3} are not applicable for studying problem
\eqref{equ:Tri}.

We now recall some known results concerning problem
\eqref{equ:Tri}. Under the conditions
\begin{equation}\label{equ:ugly}
\left\{ \enspace
\begin{aligned}
\f{(n+1)(p-1)}{p+1}&\le\f{k}{k+1},\\
\left(\f{2}{p-1}-\f{n(k+1)}{p+1}\right)p&\le 1,\\
\f{p+1}{p(p-1)n(k+1)}\le\f{1}{p+1}&\le\f{k+2}{(n+1)(p-1)(k+1)}
\end{aligned}
\right.
\end{equation}
(corresponding to (1.8) and (1.12) of \cite{Yag2} with $\al=p-1$ and
$\beta=\f{2}{p-1}-\f{n(k+1)}{p+1}$) it was shown in \cite[Theorem~1.2]{Yag2}
that problem \eqref{equ:Tri} has a global small data solution
$u\in C([0, \infty), L^{p+1}(\R^n)) \cap C^1([0, \infty), {\mathcal D}'(\R^n))$.
On the other hand, under the conditions $\int_{\R^n} u_1(x)dx>0$ and
\begin{equation}\label{equ:blow}
1<p<\f{(k+1)n+1}{(k+1)n-1},
\end{equation}
it was shown in \cite[Theorem~1.3]{Yag2} that problem \eqref{equ:Tri}
has no global solution $u\in C([0, \infty), L^{p+1}(\R^n))$. Here we
  point out that \eqref{equ:blow} comes from condition (1.15) of
  \cite{Yag2}. In particular, for $n=3$, from $\eqref{equ:ugly}$ and
  $\eqref{equ:blow}$ one has (see also (1.16) of \cite{Yag2}):

\begin{conclusion}\label{thmB}
Let $n=3$.
\begin{enumerate}[{\rm (i)}]
\item If $\f{3k+5+\sqrt{9k^2+42k+33}}{6k+4}<p<\min
  \left\{\f{3k+5}{3k+1}, \f{5k+4}{3k+4}\right\}$, then problem
  \eqref{equ:Tri} admits a global small data solution $u\in C([0,
    \infty), L^{p+1}(\R^3))$.
\item If $1<p<\f{3k+4}{3k+2}$, then, in general, the solution of
  problem \eqref{equ:Tri} will blow up in finite time.
\end{enumerate}
\end{conclusion}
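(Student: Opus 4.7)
The plan is to derive Theorem~\ref{thmB} purely by specializing the general results \eqref{equ:ugly} and \eqref{equ:blow} of \cite{Yag2} to $n=3$. No new analytic input is needed; the theorem is an explicit algebraic corollary of Yagdjian's Theorems~1.2 and~1.3, repackaged in closed form.

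For part (ii), I would simply substitute $n=3$ in \eqref{equ:blow}, so that $(k+1)n+1=3k+4$ and $(k+1)n-1=3k+2$, yielding the range $1<p<\f{3k+4}{3k+2}$. Blow-up under the standard positivity hypothesis on the data is then exactly the content of Theorem~1.3 of \cite{Yag2} with $n=3$.

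For part (i), I would rewrite each inequality of \eqref{equ:ugly} with $n=3$ as an explicit bound on $p$. Clearing denominators in the first inequality produces $(3k+4)p\le 5k+4$, i.e.\ the upper bound $p\le\f{5k+4}{3k+4}$. The second inequality, after multiplication by $(p-1)(p+1)>0$, rearranges to the quadratic $(3k+2)p^2-(3k+5)p-1\ge 0$, whose unique positive root is $\f{3k+5+\sqrt{9k^2+42k+33}}{6k+4}$, giving the lower bound on $p$. The positivity condition $\beta=\f{2}{p-1}-\f{3(k+1)}{p+1}\ge 0$, required in Yagdjian's weighted $L^q$ framework, supplies the second upper bound $p\le\f{3k+5}{3k+1}$. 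The third line of \eqref{equ:ugly} adds no new restriction: its left half is equivalent to the quadratic just derived and its right half only yields the weaker bound $p\le\f{5k+6}{3k+2}$. Intersecting the surviving constraints recovers the interval in the theorem, and Theorem~1.2 of \cite{Yag2} then produces the global solution in $C([0,\infty),L^{p+1}(\R^3))$.

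The only delicate point I foresee is the hidden positivity requirement $\beta\ge 0$ which produces the second upper bound: it is not displayed in \eqref{equ:ugly} as quoted but is part of the original Yagdjian setting, and it explains the appearance of $\f{3k+5}{3k+1}$ inside the $\min$ of the statement. Once that is identified, the remainder of the argument is routine algebra, and the non-emptiness of the resulting $p$-interval (which determines the admissible range of $k$) can be checked directly by cross-multiplication of the lower and upper bounds.
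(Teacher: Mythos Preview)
Your proposal is correct and matches the paper's treatment: Theorem~\ref{thmB} is not given an independent proof in the paper but is simply quoted as the specialization to $n=3$ of Yagdjian's conditions \eqref{equ:ugly} and \eqref{equ:blow}, which is precisely the route you take. Your algebra is sound, and your observation that the upper bound $p\le\frac{3k+5}{3k+1}$ must come from the positivity requirement $\beta=\frac{2}{p-1}-\frac{3(k+1)}{p+1}\ge 0$ (implicit in Yagdjian's weighted framework but not displayed among the inequalities of \eqref{equ:ugly}) is a correct and useful clarification that the paper itself leaves to the reader.
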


Based on this theorem, Yagdjian \cite{Yag2} put forward the following
conjecture (which corresponds to (1.17) of \cite{Yag2} with
$\al=p-1$):

\begin{conjecture}
Let $n=3$ and $\f{3k+4}{3k+2}\leq p\leq
\f{3k+5+\sqrt{9k^2+42k+33}}{6k+4}$. Then small data solutions of
problem \eqref{equ:Tri} exists globally.
\end{conjecture}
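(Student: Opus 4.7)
The strategy is to match the endpoints of the conjectured range with the critical and conformal exponents $p_{\crit}(m,n)$ and $p_{\conf}(m,n)$ introduced in this paper, specialised to $(m,n)=(2k,3)$. A direct algebraic check shows that the upper bound $\f{3k+5+\sqrt{9k^2+42k+33}}{6k+4}$ is the positive root of
\[
(3k+2)p^2-(3k+5)p-1=0,
\]
which one expects to coincide with the defining quadratic of $p_{\conf}(2k,3)$, while the lower bound $\f{3k+4}{3k+2}=1+\f{2}{(k+1)\cdot 3-1}$ is the Fujita exponent of the scale-invariant damped wave equation \eqref{equ:W} obtained from \eqref{equ:Tri} under the Liouville substitution $T=t^{k+1}/(k+1)$, and hence ought to agree with $p_{\crit}(2k,3)$. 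Granting this matching, the conjecture splits into two global-existence claims, one for $p>p_{\conf}(2k,3)$ and one for $p_{\crit}(2k,3)<p\le p_{\conf}(2k,3)$.

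For $p>p_{\conf}(2k,3)$ the plan is to use Yagdjian's integral representation of the linear Tricomi propagator in terms of Gauss hypergeometric functions, extract from it weighted $L^q_tL^r_x$ Strichartz-type decay estimates by stationary-phase analysis along the parabolic characteristics $x=\pm\f{2}{2k+2}t^{k+1}+c$, and solve the semilinear problem by a contraction-mapping argument in a weighted mixed-norm space $L^q_t((1+t)^\si L^r_x)$ whose exponents are tuned to the scaling of $|u|^p$. This is the super-conformal regime and is handled within the present paper.

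For the harder subrange $p_{\crit}(2k,3)<p\le p_{\conf}(2k,3)$, deferred to the companion paper \cite{He}, I would work in the Liouville variable $T$ and treat the problem as
\[
\p_T^2\t u-\Delta\t u+\f{\mu_k}{T}\,\p_T\t u=C_k T^{-\f{2k}{k+1}}|\t u|^p,
\]
with $\mu_k=\f{k}{k+1}\in(0,1)$. The idea is to derive weighted energy/Strichartz estimates of Ikeda--Wakasugi type that interpolate between wave-type decay near $p=p_{\conf}(2k,3)$ and heat-type decay near $p=p_{\crit}(2k,3)$, exploit the integrability gain $T^{-2k/(k+1)}$ in the nonlinearity, and close a Banach fixed point in a mixed Sobolev/weighted-energy space.

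The main obstacle is the scale-invariant damped wave analysis for $\mu_k\in(0,1)$: Conclusion A covers only the regimes $\al>1$, or $\al=1$ with $0<\mu\ll1$ or $\mu\gg1$, so new linear decay estimates must be built from scratch, presumably via explicit Bessel-type representations of the Green's function together with careful asymptotic expansion. A secondary difficulty is the degeneracy of $\p_t^2-t^{2k}\Delta$ at $t=0$, which I would treat by a separate local energy estimate on $t\in[0,1]$ before applying the Liouville substitution in the long-time regime.
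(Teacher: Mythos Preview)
Your proposal rests on a misreading of the statement's role in the paper. This is Yagdjian's \emph{conjecture}, and the paper does not prove it --- it \emph{refutes} it on part of the stated range. The remark immediately after Theorem~\ref{thm1.1} computes
\[
\f{3k+4}{3k+2}\;<\;p_{\crit}(2k,3)=\frac{k+4+\sqrt{25k^2+48k+32}}{6k+4}\;<\;\f{3k+5+\sqrt{9k^2+42k+33}}{6k+4},
\]
and then Theorem~\ref{thm1.1} (blowup for $1<p<p_{\crit}$) shows that the conjecture \emph{fails} for $\f{3k+4}{3k+2}\le p<p_{\crit}(2k,3)$. There is no proof of the conjecture to compare your attempt against.

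Both endpoint identifications you make are incorrect. The lower bound $\f{3k+4}{3k+2}$ is strictly smaller than $p_{\crit}(2k,3)$, not equal to it; the defining quadratic for $p_{\crit}(2k,3)$ is $(3k+2)p^2-(k+4)p-(2k+2)=0$, not the one you wrote. Likewise the upper bound $\f{3k+5+\sqrt{9k^2+42k+33}}{6k+4}$ is strictly smaller than $p_{\conf}(2k,3)=\f{3k+6}{3k+2}$ (see Remark~\ref{rem1.3}), so the conjectured range lies entirely \emph{below} the conformal exponent and your ``super-conformal regime'' is never reached. The plan of splitting the conjecture into a super-conformal part handled by Strichartz estimates and a sub-conformal part handled via the Liouville substitution therefore never gets off the ground: the first part is empty, and the second part includes a subinterval on which solutions genuinely blow up.
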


In this and a forthcoming paper \cite{He}, we will systematically
study problem $\eqref{equ:original}$. In particular, our analysis will
show that Yagdjian's conjecture fails in a certain range of $p$.

Let $p_{\crit}(m,n)$ be the positive root $p$ of the quadratic equation
\begin{equation}\label{equ:crit}
\left(\left(m+2\right)\frac{n}{2}-1\right)p^2
+\left(\left(m+2\right)\left(1-\frac{n}{2}\right)-3\right)p-(m+2)=0.
\end{equation}
Note that $p_{\crit}(0,n)=p_1(n)$, see \eqref{1.3}. Then our first
result asserts:

\begin{theorem}[Blow up for $1<p<p_{\crit}(m,n)$]\label{thm1.1}
Let $1<p<p_{\crit}(m,n)$ and suppose that $u_i\ge 0$ and
$u_i\not\equiv 0$ for $i=0, 1$. Then problem \eqref{equ:original} admits
no global solution $u$ with $u\in C([0, \infty), H^1(\R^n))
  \cap C^1([0, \infty), L^2(\R^n))$.
\end{theorem}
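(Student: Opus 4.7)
The plan is to use the test function method of Yordanov--Zhang, adapted to the Tricomi-type operator $\p_t^2-t^m\Delta$, combined with an ODE iteration argument. First I would set $F(t):=\int_{\R^n}u(t,x)\,dx$. Since the equation has characteristic speed $t^{m/2}$, finite propagation yields $\mathrm{supp}\,u(t,\cdot)\subset B(0,M+\phi(t))$ with $\phi(t):=\f{2}{m+2}t^{(m+2)/2}$. Integrating the PDE over $\R^n$ eliminates the $t^m\Delta u$ contribution via the divergence theorem, so $F''(t)=\int|u|^p\,dx$. H\"older's inequality together with the volume bound $|\mathrm{supp}\,u(t,\cdot)|\le C(1+t)^{n(m+2)/2}$ then yields the key differential inequality
\begin{equation*}
F''(t)\ge C(1+t)^{-n(m+2)(p-1)/2}F(t)^p,\qquad t\ge 1.
\end{equation*}

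Next, to produce a nontrivial power lower bound on $F(t)$, I would introduce a positive test function $\psi(t,x)=\la(t)\vp(x)$ solving the homogeneous equation $\p_t^2\psi-t^m\Delta\psi=0$, where $\vp(x):=\int_{S^{n-1}}e^{x\cdot\o}\,d\o$ (so that $\Delta\vp=\vp$ with the classical asymptotics $\vp(x)\sim c_n|x|^{-(n-1)/2}e^{|x|}$), and $\la(t)>0$ is the decaying solution of $\la''=t^m\la$. Under the substitution $\tau=\phi(t)$, this ODE becomes the modified Bessel equation of order $1/(m+2)$, giving $\la(t)\sim c\,t^{-m/4}e^{-\phi(t)}$ as $t\to\infty$. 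The Wronskian identity $\f{d}{dt}\int(u_t\psi-u\psi_t)\,dx=\int|u|^p\psi\,dx\ge 0$, combined with positivity of the initial data and the asymptotics of $\la$ and $\vp$, then yields a lower bound of the form $F(t)\ge c(1+t)^{\beta_0}$ for an explicit seed exponent $\beta_0=\beta_0(m,n,p)$.

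Third, I would iterate the first inequality in the spirit of Sideris and Yordanov--Zhang: feeding $F(t)\ge c_k(1+t)^{\beta_k}$ back into $F''\ge C(1+t)^{-\g}F^p$ with $\g:=n(m+2)(p-1)/2$ and integrating twice produces the recursion $\beta_{k+1}=p\,\beta_k-\g+2$. The sequence $\{\beta_k\}$ diverges, forcing $F$ to blow up in finite time, precisely when $p$ lies strictly below the positive root of the quadratic in \eqref{equ:crit}, that is, when $1<p<p_{\crit}(m,n)$.

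The main obstacle I anticipate is the construction and asymptotic analysis of the temporal profile $\la(t)$ and, more delicately, arranging the seed exponent $\beta_0$ from the test function step to mesh with the iteration recursion so that the resulting quadratic threshold is exactly \eqref{equ:crit}. A useful consistency check is the wave-equation case $m=0$, where the scheme reduces to the classical Yordanov--Zhang proof and $p_{\crit}(0,n)$ agrees with $p_1(n)$ in \eqref{1.3}.
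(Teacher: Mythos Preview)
Your proposal is correct and follows essentially the same route as the paper: the same averaged function $F=G=\int u\,dx$, the same H\"older inequality yielding $G''\ge C(M+t)^{-n(m+2)(p-1)/2}G^p$, the same test function $\psi(t,x)=\la(t)\vp(x)$ with $\vp(x)=\int_{S^{n-1}}e^{x\cdot\o}\,d\o$ and $\la$ the decaying modified-Bessel solution of $\la''=t^m\la$, and the same Wronskian monotonicity to extract a seed lower bound. The only cosmetic difference is that the paper packages the final ODE step as a direct application of Sideris' blowup lemma (Lemma~\ref{lem2.1}) rather than running the iteration $\beta_{k+1}=p\beta_k-\g+2$ explicitly; the seed exponent comes not directly from the Wronskian identity but via the chain $G_1(t)=\int u\psi\ge Ct^{-m/2}\Rightarrow G''\ge |G_1|^p/(\int\psi^{p'})^{p-1}\Rightarrow G\ge C(M+t)^{\al}$, which produces exactly the $\al$ that makes the condition $(p-1)\al>q-2$ equivalent to \eqref{equ:crit}.
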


\begin{remark}
We have $p_{\crit}(2k,3)=\frac{k+4+\sqrt{25k^2+48k+32}}{6k+4}$,
and it follows from a direct computation that
\begin{equation} \label{equ: Ya}
\f{3k+4}{3k+2}<p_{\crit}(2k,3)<\f{3k+5+\sqrt{9k^2+42k+33}}{6k+4}.
\end{equation}
In particular, our result shows that Yagdjian's conjecture fails for
$\f{3k+4}{3k+2}\leq p < p_{\crit}(2k,3)$.
\end{remark}

Next we discuss the global existence problem for
$\eqref{equ:original}$.  Denote by $N=1+\f{\left(m+2\right)n}{2}$
the homogeneous dimension of the operator $\p_t^2-t^{m}\Delta$. Then
the exponent $p$ leading to a conformally invariant equation in
$\eqref{equ:original}$ is
\begin{equation}\label{equ:conf}
p_{\conf}(m,n)=\f{N+2}{N-2}=\frac{\left(m+2\right)n+6}{\left(m+2\right)n-2}.
\end{equation}

\begin{theorem}[Global existence for $p>p_{\conf}(m,n)$]\label{thm1.2}
Let either $p_{\conf}(m,n)<p\le \frac{(m+2)(n-2)+6}{(m+2)(n-2)-2}$
or $p>\frac{(m+2)(n-2)+6}{(m+2)(n-2)-2}$ and $p$ be an integer,
where in the latter case the nonlinearity $|u|^p$ is replaced with
$\pm\, u^p$. Then there exists a constant $\ve_0>0$ such that problem
\eqref{equ:original} admits a global weak solution $u\in
L^r(\R^{n+1}_+)$ whenever
$\|u_0\|_{H^s}+\|u_1\|_{H^{s-\f{2}{m+2}}}\le\ve_0$, where $
s=\f{n}{2}-\f{4}{(m+2)(p-1)}$ and $
r=\frac{(m+2)n+2}{4}\left(p-1\right)$.
\end{theorem}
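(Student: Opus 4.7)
My plan is a contraction-mapping argument for \eqref{equ:original} in the scaling-critical Banach space $L^r(\R^{n+1}_+)$, based on a Strichartz-type estimate for the linear generalized Tricomi operator. The scaling $(t,x)\mapsto(\la t,\la^{(m+2)/2}x)$ leaves $\p_t^2-t^m\Delta$ homogeneous of degree $2$ in $t$ and, for \eqref{equ:original}, forces the rescaling $u\mapsto\la^{2/(p-1)}u$. A direct computation shows that $L^r(\R^{n+1}_+)$ with $r=((m+2)n+2)(p-1)/4$ and $H^s(\R^n)\times H^{s-2/(m+2)}(\R^n)$ with $s=\f{n}{2}-\f{4}{(m+2)(p-1)}$ are precisely the scale-invariant spaces; moreover, with $q=r/p$ the gap relation $\f{1}{q}-\f{1}{r}=\f{2}{N}$ (where $N=1+(m+2)n/2$ is the homogeneous dimension of $\p_t^2-t^m\Delta$) is exactly the conformal Strichartz gap. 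Thus all indices appearing in the theorem are forced by the scaling of the problem.

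\textbf{Linear Strichartz estimate.} The analytic heart of the proof is the inequality
\[
\|v\|_{L^r(\R^{n+1}_+)}\le C\bigl(\|v_0\|_{H^s(\R^n)}+\|v_1\|_{H^{s-2/(m+2)}(\R^n)}+\|F\|_{L^{r/p}(\R^{n+1}_+)}\bigr)
\]
for solutions of $\p_t^2 v-t^m\Delta v=F$, $(v,\p_t v)|_{t=0}=(v_0,v_1)$. I would derive it in three steps. First, a Fourier transform in $x$ reduces the homogeneous equation to the ODE $\p_t^2\hat v+t^m|\xi|^2\hat v=0$, which after the substitution $\tau=\f{2}{m+2}|\xi|t^{(m+2)/2}$ becomes a Bessel equation of order $\f{1}{m+2}$ and yields explicit Hankel-function representations for the two Cauchy propagators. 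Second, the resulting oscillatory integrals, whose phase is of order $t^{(m+2)/2}|\xi|$, are analyzed by stationary-phase to produce an $L^{r'}\to L^r$ dispersive bound. Third, a $TT^*$ argument combined with the Christ--Kiselev lemma converts this into the full inhomogeneous estimate above; crucially, the source exponent $r/p$ is the one ``dual-on-the-scaling-line'' to $r$, which is precisely what makes the estimate close at the conformal exponent.

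\textbf{Fixed point.} Define $\Phi(u)=v$, where $v$ solves the linear Tricomi equation with source $|u|^p$ and initial data $(u_0,u_1)$. Since $\||u|^p\|_{L^{r/p}}=\|u\|_{L^r}^p$, the linear estimate gives $\|\Phi(u)\|_{L^r}\le C_0\ve_0+C_1\|u\|_{L^r}^p$. The pointwise bound $\bigl||u|^p-|v|^p\bigr|\le p(|u|^{p-1}+|v|^{p-1})|u-v|$, H\"older's inequality in the exponents $(r/(p-1),r)$, and a second application of the Strichartz estimate yield
\[
\|\Phi(u)-\Phi(v)\|_{L^r}\le C_1\bigl(\|u\|_{L^r}^{p-1}+\|v\|_{L^r}^{p-1}\bigr)\|u-v\|_{L^r}.
\]
Choosing $\ve_0>0$ small enough so that the ball $B_R=\{u\in L^r(\R^{n+1}_+)\colon\|u\|_{L^r}\le 2C_0\ve_0\}$ is stable under $\Phi$ and $\Phi|_{B_R}$ is a strict contraction produces the desired global weak solution $u\in L^r(\R^{n+1}_+)$.

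\textbf{Main obstacle and the large-$p$ case.} The hardest step is the kernel and stationary-phase analysis underlying the linear Strichartz inequality: the propagator phase $t^{(m+2)/2}|\xi|$ is not linear in $t$, so the classical wave-equation oscillatory-integral machinery must be reworked to cover both the regime where $\p_t^2-t^m\Delta$ degenerates (small $t$) and the regime where, after the change of variables $T=\f{2}{m+2}t^{(m+2)/2}$, it reduces to a wave-like operator with an effective damping of order $1/T$ (large $t$), with bounds uniform across both. Finally, in the range $p>\f{(m+2)(n-2)+6}{(m+2)(n-2)-2}$ the critical index $s$ exceeds the threshold at which derivatives of $|u|^p$ can be controlled by $L^r$-data of $u$ alone; restricting to integer $p$ with the signed nonlinearity $\pm u^p$ turns the Nemytskii map into a polynomial, and then the same H\"older-based contraction estimate closes verbatim.
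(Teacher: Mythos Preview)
Your high-level plan---a fixed-point argument driven by a Strichartz-type inequality for $\p_t^2-t^m\Delta$---matches the paper's, and your scaling analysis correctly identifies $r$, $s$, and the role of the homogeneous dimension $N$. The divergence is in the form of the inhomogeneous linear estimate, and this is not merely cosmetic.

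You base everything on the single estimate
\[
\|w\|_{L^r(\R^{n+1}_+)}\le C\,\|F\|_{L^{r/p}(\R^{n+1}_+)},
\]
with the source exponent $r/p$ moving with $p$. The paper does \emph{not} prove such an estimate. What is actually established (Lemma~3.4) is
\[
\|w\|_{L^q(\R^{n+1}_+)}\le C\,\bigl\||D_x|^{\gamma-\frac{1}{m+2}}F\bigr\|_{L^{p_0}(\R^{n+1}_+)},\qquad
\gamma=\frac{n}{2}-\frac{(m+2)n+2}{q(m+2)},\quad q_0\le q<\infty,
\]
with the \emph{fixed} source exponent $p_0=\frac{2((m+2)n+2)}{(m+2)n+6}$ and a fractional derivative on $F$. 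Your estimate and the paper's agree only at the conformal exponent $p=p_{\conf}(m,n)$, where $r=q_0$, $r/p=p_0$, and $\gamma=\f{1}{m+2}$; for $p>p_{\conf}(m,n)$ they are genuinely different, and the isotropic $L^{r/p}\to L^r$ bound you need does not follow from the paper's dispersive machinery (nor from Sobolev embedding, since the time integrability on the source side is pinned at $p_0$).

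This has two concrete consequences. First, the iteration norm cannot be $\|u_k\|_{L^r}$ alone; the paper uses
\[
M_k=\|u_k\|_{L^r(\R^{n+1}_+)}+\bigl\||D_x|^{\gamma-\frac{1}{m+2}}u_k\bigr\|_{L^{q_0}(\R^{n+1}_+)},
\]
and the nonlinear step requires controlling $\bigl\||D_x|^{\gamma-\frac{1}{m+2}}|u_k|^p\bigr\|_{L^{p_0}}$ via the fractional Leibniz/chain rule of Christ--Weinstein. Second---and this is where your outline becomes internally inconsistent---the two-case split in the theorem is driven precisely by this Leibniz rule: it applies only when $0\le\gamma-\frac{1}{m+2}\le 1$, which is exactly the range $p_{\conf}(m,n)<p\le\frac{(m+2)(n-2)+6}{(m+2)(n-2)-2}$. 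For larger $p$ the fractional order exceeds $1$, and the paper distributes derivatives across the $p$ factors using the Kenig--Ponce--Vega product rule, which needs the nonlinearity to be a genuine polynomial $\pm u^p$. In your pure-$L^r$ scheme no derivatives ever appear, so your explanation of why integer $p$ is required (``$s$ exceeds the threshold at which derivatives of $|u|^p$ can be controlled by $L^r$-data'') has nothing to attach to: if your linear estimate held, the contraction would close identically for all $p>p_{\conf}(m,n)$ with the bare $|u|^p$, and no split would be needed. That tension is the signal that the inhomogeneous estimate you wrote down is the missing piece.
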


\begin{remark}\label{rem1.3}
It holds
\[
p_{\conf}(2k,3)=\frac{3k+6}{3k+2}>\f{3k+5+\sqrt{9k^2+42k+33}}{6k+4}.
\]
So, we have especially improved the upper bound (from $
\min\left\{\f{3k+5}{3k+1}, \f{5k+4}{3k+4}\right\}$ to $\infty$) for
the exponent~$p$ in Theorem~\ref{thmB} of Yagdjian to obtain global
existence for small data solutions of problem $\eqref{equ:Tri}$.
\end{remark}

\begin{remark}\label{rem1.4}
If the initial data $u_0\in H^s(\R^n)$ and $u_1\in H^{s-\f{2}{m+2}}(\R^n)$
with $s\ge 0$ is given, then, by a scaling argument as in
\cite{Gls1}, one can deduce that problem \eqref{equ:original} is
ill-posed for $s<\frac{n}{2}-\frac{4}{(m+2)(p-1)}$. See
\cite{Rua4} for details.
\end{remark}

\begin{remark}\label{rem1.5}
A direct verification shows $p_{\crit}(m,n)<p_{\conf}(m,n)$ when $n\ge
3$.  In a forthcoming paper \cite{He} we shall establish the global
existence of small data solution of \eqref{equ:original} when
$p_{\crit}(m,n)<p\le p_{\conf}(m,n)$.
\end{remark}

\begin{remark}\label{rem1.6}
As in \cite[page 368]{Gls2}, where the semilinear wave equation
\eqref{equ:wave} was studied, when $
p>\frac{(m+2)(n-2)+6}{(m+2)(n-2)-2}$, we also impose additional
restrictions on the exponent~$p$ and the nonlinearity appearing in
\eqref{equ:original}. More specifically, we require that $p$ is an
integer and the nonlinearity is equal to $\pm\, u^p$.
\end{remark}

There is an extensive list of results concerning the Cauchy problem
for both linear and semilinear generalized Tricomi equations. For
instances, for linear generalized Tricomi equations, Barros-Neto and
Gelfand in \cite{Bar} and Yagdjian in \cite{Yag1} computed the
fundamental solution explicitly. More recently, the authors of
\cite{Rua1, Rua2, Rua3, Rua4} established the local existence as well
as the singularity structure of low regularity solutions of the
semilinear equation $\partial_t^2u -t^m\triangle u=f(t,x,u)$ in the
degenerate hyperbolic region and the elliptic-hyperbolic mixed region,
respectively, where $f$ is a $C^1$ function and has compact support
with respect to the variable $x$. Yagdjian \cite{Yag2} obtained a
number of interesting results about the global existence and the
blowup of solutions of problem \eqref{equ:original} when the exponent
$p$ belongs to a certain range. In \cite{Yag2}, however, there is a
gap between the global existence interval and the blowup interval;
moreover, the critical exponent $p_{\crit}(m,n)$ was not
determined there. In this paper and in a forthcoming paper \cite{He},
motivated by the Strauss conjecture, we will systematically study the
blowup problem and the global existence problem for
\eqref{equ:original}.

We now comment on the proofs of Theorem~\ref{thm1.1} and
Theorem~\ref{thm1.2}. To prove Theorem~\ref{thm1.1}, we define the
function $G(t)=\int_{\R^n}u(t,x)\,dx$ as in \cite{Yor} and, by
applying some crucial techniques for the modified Bessel function as
in \cite{Hon, Rua3} and by choosing a good test function, we derive a
Riccati-type ordinary differential inequality for $G(t)$ by a delicate
analysis of \eqref{equ:original}. From this, the blowup result in
Theorem 1.1 can be derived under the positivity assumptions of $u_0$
and $u_1$. To prove the global existence result in
Theorem~\ref{thm1.2}, motivated by \cite{Gls1,Gls2}, where basic
Strichartz estimates were obtained for the linear wave operator, we
are required to establish Strichartz estimates for the generalized
Tricomi operator $\p_t^2-t^m\Delta$.  In this process, a series of
inequalities are derived by applying an explicit formula for solutions
of the linear generalized Tricomi equations and by utilizing some
basic properties of related Fourier integral operators.  Based on the
resulting inequalities and the contraction mapping principle, we
eventually complete the proof of Theorem~\ref{thm1.2}.

\medskip

This paper is organized as follows: In $\S 2$,
the blowup result in Theorem 1.1 is obtained.
In $\S 3$, some basic Strichartz inequalities
are established for the linear generalized Tricomi  operator $\p_t^2-t^m\Delta$.
In $\S 4$, by the results in $\S 3$ and contractible mapping principle, we shall complete the
proof of Theorem 1.2.


\section{Proof of Theorem~\ref{thm1.1}}

In this section, we shall prove blowup in finite time for certain
local solutions $u$ of \eqref{equ:original}. To this end, we introduce
the function $G(t)=\int_{\R^n}u(t,x)\,dx$. By some delicate analysis,
we then obtain a Riccati-type differential inequality for $G(t)$ so
that blowup of $G(t)$ can be deduced from the following result (see
\cite[Lemma~4]{Sid}):

\begin{lemma}\label{lem2.1}
Suppose that $G\in C^2([a,b);\R)$ and, for $a\leq t<b$,
\begin{align}
G(t)&\geq C_0(R+t)^\alpha, \label{equ:2.1}\\
G''(t)&\geq C_1(R+t)^{-q}G(t)^p, \label{equ:2.2}
\end{align}
where $C_0$, $C_1$, and $R$ are some positive constants. Suppose further
that $p>1$, $\alpha\geq 1$, and $\left(p-1\right)\alpha\geq q-2$. Then
$b$ is finite.
\end{lemma}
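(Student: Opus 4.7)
The plan is classical: extract from \eqref{equ:2.1} and \eqref{equ:2.2} a separable Bernoulli-type first-order differential inequality for $G$, then integrate it to force $G$ to become infinite in finite time.

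First I would use \eqref{equ:2.1} and \eqref{equ:2.2} to arrange $G'(t)>0$ eventually. Since $G(t)\ge C_0(R+t)^\al>0$, \eqref{equ:2.2} gives $G''>0$, so $G'$ is strictly increasing; combined with $G(t)\to\infty$ this yields some $t_0\in[a,b)$ with $G'(t)>0$ for all $t\ge t_0$. Multiplying \eqref{equ:2.2} by $G'$, integrating on $[t_0,t]$, and integrating by parts using $G^pG'=\frac{1}{p+1}(G^{p+1})'$ produces
\[
\tfrac12(G'(t))^2\ge \tfrac{C_1}{p+1}(R+t)^{-q}G(t)^{p+1}+\tfrac{C_1 q}{p+1}\!\int_{t_0}^{t}\!(R+s)^{-q-1}G(s)^{p+1}\,ds-K,
\]
where $K$ gathers the initial-data contributions at $t_0$. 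Because $G(t)^{p+1}\ge C_0^{p+1}(R+t)^{\al(p+1)}$, the first right-hand term is at least $c(R+t)^{\al(p+1)-q}$, and the hypotheses $\al\ge 1$ and $(p-1)\al\ge q-2$ give $\al(p+1)-q\ge 2\al-2\ge 0$. Hence this dominant term absorbs $K$ for $t$ sufficiently large, and I obtain, for some $c>0$ and $t_1\in[t_0,b)$,
\[
(G'(t))^2\ge c\,(R+t)^{-q}G(t)^{p+1},\qquad t\ge t_1.
\]

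Next I would separate variables. Since $G'>0$, extracting a square root and rewriting gives
\[
\frac{d}{dt}\!\left(-\tfrac{2}{p-1}G(t)^{-(p-1)/2}\right)=G(t)^{-(p+1)/2}G'(t)\ge \sqrt{c}\,(R+t)^{-q/2},
\]
so integrating from $t_1$ to $t$ and rearranging yields
\[
G(t_1)^{-(p-1)/2}\ge G(t)^{-(p-1)/2}+\tfrac{(p-1)\sqrt{c}}{2}\!\int_{t_1}^{t}\!(R+s)^{-q/2}\,ds.
\]
The right-hand side must stay nonnegative, so $G(t)$ becomes infinite at some finite $t^*\le b$ the moment the integral overtakes $G(t_1)^{-(p-1)/2}$. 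If $q\le 2$ this integral already diverges, and we are done. If $q>2$, it is comparable to $(R+t_1)^{1-q/2}$, while \eqref{equ:2.1} gives $G(t_1)^{-(p-1)/2}\le C_0^{-(p-1)/2}(R+t_1)^{-\al(p-1)/2}$; the hypothesis $(p-1)\al\ge q-2$ is exactly $-\al(p-1)/2\le 1-q/2$, and enlarging $t_1$ (with a short bootstrap of $G$ to upgrade the constants in the borderline case $(p-1)\al=q-2$) tips the comparison in our favor.

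The hard part is the energy step: the boundary term produced by the integration by parts must asymptotically dominate the remainder $K$ left over from the initial data. This is exactly what the restrictions $\al\ge 1$ and $(p-1)\al\ge q-2$ are designed to guarantee, through the exponent identity $\al(p+1)-q\ge 2\al-2$. Once the Riccati-type estimate is in place, the separation-of-variables step is routine and concludes $b<\infty$.
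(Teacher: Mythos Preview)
The paper does not give its own proof of this lemma; it simply quotes it from Sideris \cite[Lemma~4]{Sid}. Your argument is precisely the classical one: multiply \eqref{equ:2.2} by $G'$, integrate by parts to reach the first-order inequality $(G')^2\gtrsim (R+t)^{-q}G^{p+1}$, then separate variables. For the strict inequality $(p-1)\alpha>q-2$ --- which is in fact all that is invoked later in the proof of Theorem~\ref{thm1.1} --- your argument is complete as written.

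The only soft spot is the borderline case $(p-1)\alpha=q-2$ with $q>2$. There the final comparison reduces to a competition between two constants multiplying the \emph{same} power $(R+t_1)^{1-q/2}$, and nothing in the hypotheses forces that comparison to go your way; ``enlarging $t_1$'' does not help when the exponents coincide. Your parenthetical ``short bootstrap'' is the right idea but needs to be spelled out: one feeds \eqref{equ:2.1} into \eqref{equ:2.2} and integrates twice to obtain an improved lower bound for $G$ (a larger constant, or in the extreme case $\alpha=1$ a logarithmic gain), and this improvement is what eventually tips the final inequality. This is a known technicality in Sideris's lemma rather than a defect in your overall strategy.
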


In view of $\operatorname{supp} u_i\subseteq B(0, M)$ ($i=0,1$) and
the finite propagation speed for solutions of hyperbolic equations,
one has that, for any fixed $t>0$, the support of $u(t,\cdot)$ with
respect to the variable $x$ is contained in the ball $B(0,
M+\phi(t))=\{x\colon |x|<M+\phi(t)\}$, where $\phi(t)=
\f{2}{m+2}\,t^{\f{m+2}{2}}$. Then it follows from an integration by
parts that
\begin{equation*}
G''(t)=\int_{\R^n}|u(t,x)|^p\, dx
\geq \frac{\left|\int_{\R^n}u(t,x)\,dx\right|^p}{
\left(\int_{|x|\leq M+\phi(t)}\,dx\right)^{p-1}}
\geq C(M+t)^{-\frac{m+2}{2}\,n(p-1)}\,|G(t)|^p,
\end{equation*}
which means that $G(t)$ fulfills inequality \eqref{equ:2.2} with $
q=\frac{m+2}{2}\,n\left(p-1\right)$ (once inequality \eqref{equ:2.1}
has been verified demonstrating that $G$ is positive). To establish
\eqref{equ:2.1}, we introduce the following two functions: The first
one is
\begin{equation}\label{equ:2.3}
\varphi(x)=\int_{{\mathbb S}^{n-1}}e^{x\cdot \omega}d\omega,
\end{equation}
which was used in \cite{Yor}, where $\varphi(x)$ is also shown to satisfy
\begin{equation} \label{equ:2.4}
\varphi(x)\sim C_n\,|x|^{-\frac{n-1}{2}}e^{|x|} \quad \text{as $|x|\rightarrow\infty$}.
\end{equation}
The second function is the so-called modified Bessel function
\begin{equation*}
K_\nu(t)=\int_0^\infty e^{-t\cosh{z}}\cosh(\nu z)dz, \quad \nu\in \R,
\end{equation*}
which is a solution of the equation
\[
\left(t^2\frac{d^2}{dt^2}+t\frac{d}{dt}-(t^2+\nu^2)\right)K_\nu(t)=0, \quad t>0.
\]
From \cite[page 24]{Erd2}, we have
\begin{equation}\label{equ:2.5}
  K_\nu(t)=\sqrt{\frac{\pi}{2t}}\,e^{-t}\left(1+O(t^{-1})\right) \quad
  \text{as $t\rightarrow \infty$,}
\end{equation}
provided that $\operatorname{Re}\nu>-1/2$. Set
\begin{equation}\label{equ:2.6}
\lambda(t)=C_{m}\,t^{\frac{1}{2}}K_{\frac{1}{m+2}}\left(\frac{2}{m+2}\,t^{\frac{2}{m+2}}\right),
\quad t>0,
\end{equation}
where the constant $C_{m}>0$ is chosen so that $\lambda(t)$ satisfies
\begin{equation}\label{equ:2.7}
\left\{ \enspace
\begin{aligned}
&\la''(t)-t^m\la(t)=0, && t\geq0 \\
&\la(0)=1, \quad \la(\infty)=0.
\end{aligned}
\right.
\end{equation}

Here is a list of properties of $\lambda(t)$ (see \cite[Lemma~2.1]{Hon}):

\begin{lemma}\label{lem2.2}
\textup{(i)} $\la(t)$ and $-\la'(t)$ are both decreasing, moreover,
$\ds \lim_{t\to \infty}\lambda(t)=\lim_{t\to \infty}\lambda'(t)=0$.

\textup{(ii)} There exists a constant $C>1$ such that
\begin{equation} \label{equ:2.8}
\frac{1}{C}\leq\frac{|\lambda'(t)|}{\lambda(t)\,t^{\frac{m}{2}}} \quad
\textup{for $t>0$ \quad and} \quad
\frac{|\lambda'(t)|}{\lambda(t)\,t^{\frac{m}{2}}}\leq C \quad \textup{for
$t\geq1$.}
\end{equation}
\end{lemma}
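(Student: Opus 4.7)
The plan is to derive both parts from the ODE $\lambda''=t^m\lambda$ in \eqref{equ:2.7} combined with the positivity of $K_{1/(m+2)}$ and the asymptotic \eqref{equ:2.5}.

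For part (i), the integral representation of $K_{1/(m+2)}$ ensures $\lambda(t)>0$ for $t\ge 0$, so the ODE gives $\lambda''(t)=t^m\lambda(t)\ge 0$. Hence $\lambda'$ is monotone non-decreasing and $-\lambda'$ is non-increasing. Since $\lambda(0)=1$ and $\lambda(\infty)=0$, the fundamental theorem of calculus yields $\int_0^\infty\lambda'(s)\,ds=-1$, so $\lambda'$ is integrable on $[0,\infty)$; a monotone integrable function on $[0,\infty)$ must tend to $0$ at infinity, hence $\lambda'(\infty)=0$. Being non-decreasing with zero limit, $\lambda'$ is $\le 0$ throughout, so $\lambda$ is non-increasing as well. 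This establishes (i).

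For part (ii), I multiply the ODE by $\lambda'$ and integrate by parts on $[t,\infty)$, using $\lambda'(\infty)=0$ from (i) together with $t^m\lambda(t)^2\to 0$ (which follows from \eqref{equ:2.5}), to obtain
\begin{equation*}
(\lambda'(t))^2=t^m\lambda(t)^2+m\int_t^\infty s^{m-1}\lambda(s)^2\,ds.
\end{equation*}
Non-negativity of the right-hand side immediately yields $|\lambda'(t)|\ge t^{m/2}\lambda(t)$ for all $t>0$, giving the lower bound in \eqref{equ:2.8} with $C=1$.

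For the upper bound on $[1,\infty)$, the task reduces to dominating the remainder by $t^m\lambda(t)^2$. Using \eqref{equ:2.5}, I obtain the sharp asymptotic $\lambda(t)^2\asymp t^{-m/2}\exp(-\tfrac{4}{m+2}t^{(m+2)/2})$, so the integrand behaves like $s^{m/2-1}\exp(-\tfrac{4}{m+2}s^{(m+2)/2})=-\tfrac{1}{2s}\tfrac{d}{ds}\exp(-\tfrac{4}{m+2}s^{(m+2)/2})$. An integration by parts then produces a boundary term of order $t^{-1}\exp(-\tfrac{4}{m+2}t^{(m+2)/2})\asymp t^{m/2-1}\lambda(t)^2$, which for $t\ge 1$ is bounded by $t^m\lambda(t)^2$. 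The main subtlety is that \eqref{equ:2.5} is sharp only for large arguments, so this argument directly gives the upper bound only on $[T_0,\infty)$ for some $T_0\ge 1$; on the compact interval $[1,T_0]$, however, $|\lambda'(t)|/(\lambda(t)t^{m/2})$ is a continuous positive function of $t$ (since $\lambda>0$, $\lambda'<0$, $t>0$) and hence bounded, so taking $C$ to be the larger of the two resulting constants yields \eqref{equ:2.8} uniformly for $t\ge 1$.
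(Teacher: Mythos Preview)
The paper does not actually prove Lemma~\ref{lem2.2}; it merely quotes the result from \cite[Lemma~2.1]{Hon}. So there is no ``paper's own proof'' to compare against, and your write-up supplies a self-contained argument where the paper offers none.

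Your argument is correct. Part~(i) follows cleanly from convexity ($\lambda''=t^m\lambda\ge0$) together with the boundary data $\lambda(0)=1$, $\lambda(\infty)=0$, exactly as you say. For part~(ii), the energy identity
\[
(\lambda'(t))^2 = t^m\lambda(t)^2 + m\int_t^\infty s^{m-1}\lambda(s)^2\,ds
\]
is the key device: the lower bound $|\lambda'(t)|\ge t^{m/2}\lambda(t)$ drops out immediately, and for the upper bound your use of the asymptotic \eqref{equ:2.5} plus one integration by parts gives $m\int_t^\infty s^{m-1}\lambda(s)^2\,ds \le C t^{m/2-1}\lambda(t)^2\le C t^m\lambda(t)^2$ for $t$ large, after which compactness handles $[1,T_0]$. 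One small point worth making explicit: in the integration by parts the leftover integral $\int_t^\infty \tfrac{1}{2s^2}e^{-2\phi(s)}\,ds$ is non-negative and gets subtracted, so the boundary term alone already dominates the tail integral---you allude to this but do not quite say it. Otherwise the proof is complete and rather cleaner than deferring to an external reference.
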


We now introduce the test function $\psi$ with
\begin{equation}\label{equ:2.9}
\psi(t,x)=\lambda(t)\varphi(x),
\end{equation}
where the definition of $\varphi$ has been given in \eqref{equ:2.3}.
Let
\begin{equation}\label{equ:2.10}
G_1(t)=\int_{\R^n}u(t,x)\psi(t,x)\,dx.
\end{equation}
Then
\begin{equation}\label{equ:2.11}
G''(t)=\int_{\R^n}|u(t,x)|^p\, dx \geq
\frac{|G_1(t)|^p}{\left(\int_{|x|\leq
    M+\phi(t)}\psi(t,x)^{\frac{p}{p-1}}\,dx\right)^{p-1}}.
\end{equation}

For the function $G_1(t)$, we have:

\begin{lemma}\label{lem2.3}
Under the assumptions of\/ \textup{Theorem~\ref{thm1.1}}, there exists
a $t_0>0$ such that
\begin{equation}\label{equ:2.12}
G_1(t)\geq C\,t^{-\frac{m}{2}}, \quad t\geq t_0.
\end{equation}
\end{lemma}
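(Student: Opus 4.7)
The plan is to reduce the problem to a scalar ODE analysis for the auxiliary function $H(t) = \int_{\R^n} u(t,x)\vp(x)\,dx$, since $G_1(t) = \la(t) H(t)$. Differentiating under the integral in \eqref{equ:2.3} and using $\Delta_x e^{x\cdot\o} = |\o|^2 e^{x\cdot\o} = e^{x\cdot\o}$ on ${\mathbb S}^{n-1}$ gives $\Delta \vp = \vp$. Because $u(t,\cdot)$ is compactly supported in $x$, integration by parts applied to the equation in \eqref{equ:original} produces the inhomogeneous ODE inequality
\begin{equation*}
H''(t) - t^m H(t) = \int_{\R^n} |u(t,x)|^p \vp(x)\,dx \geq 0,
\end{equation*}
so $H$ is a subsolution of the linear equation satisfied by $\la$; see \eqref{equ:2.7}.

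The next step is a Wronskian comparison against $\la$. Setting $W(t) = H'(t)\la(t) - H(t)\la'(t)$ and combining the above ODE with $\la''=t^m\la$ yields $W'(t) = \la(t)\int_{\R^n}|u|^p\vp\,dx \geq 0$, so $W$ is non-decreasing. Since $\vp>0$ pointwise, $\la(0)=1$, and $\la'(0)\leq 0$ by Lemma~\ref{lem2.2}(i), the hypotheses $u_i\geq 0$, $u_i\not\equiv 0$ force
\begin{equation*}
W(0) = \int_{\R^n} u_1\vp\,dx - \la'(0)\int_{\R^n} u_0\vp\,dx \geq \int_{\R^n} u_1\vp\,dx > 0.
\end{equation*}
Rewriting $(H/\la)' = W/\la^2 \geq W(0)/\la^2$, integrating from $0$ to $t$, and multiplying by $\la(t)^2$ yields the key lower bound
\begin{equation*}
G_1(t) = \la(t) H(t) \geq \la(t)^2 H(0) + W(0)\,\la(t)^2 \int_0^t \f{ds}{\la(s)^2} \geq W(0)\,\la(t)^2 \int_0^t \f{ds}{\la(s)^2}.
\end{equation*}

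It remains to show $\la(t)^2 \int_0^t \la(s)^{-2}\,ds \geq c\, t^{-m/2}$ for $t$ large. Inserting the asymptotic \eqref{equ:2.5} into \eqref{equ:2.6} gives $\la(t) \sim C\, t^{-m/4} \exp(-\f{2}{m+2}t^{(m+2)/2})$, so $\la(s)^{-2} \sim C'\, s^{m/2} \exp(\f{4}{m+2}s^{(m+2)/2})$ for large $s$. Because of the exact identity
\begin{equation*}
\f{d}{ds}\exp\!\Bigl(\f{4}{m+2}s^{(m+2)/2}\Bigr) = 2 s^{m/2}\exp\!\Bigl(\f{4}{m+2}s^{(m+2)/2}\Bigr),
\end{equation*}
the integral $\int_0^t \la(s)^{-2}\,ds$ is asymptotically $\tfrac{C'}{2}\exp(\f{4}{m+2}t^{(m+2)/2})$: the exponential concentrates the mass at the upper endpoint. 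Multiplying by $\la(t)^2$ cancels the exponentials and leaves $\tfrac12 t^{-m/2}$ (up to a $1+o(1)$ factor), establishing \eqref{equ:2.12} for all $t \geq t_0$ sufficiently large.

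The main technical obstacle is the careful asymptotic handling of $\int_0^t \la(s)^{-2}\,ds$, since the error terms $O(s^{-(m+2)/2})$ implicit in \eqref{equ:2.5} enter inside a factor that grows exponentially. I would handle this by splitting at a large fixed $s_*$: the contribution from $[0,s_*]$ is $O(1)$ and is killed by the vanishing prefactor $\la(t)^2$, while on $[s_*,t]$ the integrand differs from $2 s^{m/2}\exp(\f{4}{m+2}s^{(m+2)/2})$ by a factor $1+O(s^{-(m+2)/2})$, so after integration and multiplication by $\la(t)^2$ one loses only a harmless $1+o(1)$ in the leading constant.
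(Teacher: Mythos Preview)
Your argument is correct and is, in fact, cleaner than the paper's. Both proofs hinge on the same invariant---the Wronskian $W=H'\la-H\la'$ is non-decreasing and $W(0)>0$---but they extract the lower bound for $G_1$ differently. The paper rewrites $W\ge c$ as $G_1'(t)-2\la'(t)H(t)\ge c$, then invokes the \emph{upper} bound $|\la'|\le C\la\,t^{m/2}$ from Lemma~\ref{lem2.2}(ii) to obtain the first-order inequality $G_1'+Ct^{m/2}G_1\ge c$; this step requires $H\ge 0$, which is not known a priori, so the paper first runs a contradiction argument (assuming a first zero of $G_1$) to secure positivity, and only afterwards re-runs the integrating-factor computation to get the quantitative decay $G_1\gtrsim t^{-m/2}$. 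Your route bypasses all of this: writing $(H/\la)'=W/\la^2\ge W(0)/\la^2$ and integrating gives the explicit lower bound $G_1(t)\ge \la(t)^2H(0)+W(0)\,\la(t)^2\int_0^t\la^{-2}$ in one stroke, with no contradiction step, no appeal to the upper half of \eqref{equ:2.8}, and no need to first locate an interval $[0,t_4]$ on which several auxiliary quantities are positive.

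What the paper's approach buys is that the final inequality \eqref{equ:2.15} is purely algebraic once the integrating factor is in place, whereas you must still do an honest Laplace-type asymptotic for $\la^2\int_0^t\la^{-2}$. Your handling of that asymptotic is fine: the identity $\frac{d}{ds}e^{2\phi(s)}=2s^{m/2}e^{2\phi(s)}$ makes the leading term exact, and splitting at a fixed $s_*$ controls the $1+O(\phi(s)^{-1})$ error from \eqref{equ:2.5} uniformly. One small point worth making explicit in your write-up is that $\la'(0)$ is finite (the ODE $\la''=t^m\la$ has two smooth solutions at $t=0$, and the decay condition selects one of them), so $W(0)$ is well-defined; Lemma~\ref{lem2.2}(i) then gives $\la'(0)\le 0$ as you use it.
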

\begin{proof}
In view of $u\in C([0,T),H^1(\R^n))$, one has that
$G_1(t)$ is a continuous function of $t$. In view of $u_0\ge 0$ and $u_0\not\equiv 0$,
we have
\[
G_1(0)=\int_{\R^n}u_0(x)\varphi(x)\,dx\geq c_0,
\]
where $c_0$ is a positive constant. Hence, there exists a constant
$t_1>0$ such that, for $0\leq t\leq t_1$,
\[
G_1(t)\geq\frac{c_0}{2}.
\]
Similarly, by Lemma~\ref{lem2.2} (i) and $u_1\ge 0$ with $u_1\not\equiv 0$, one can also
choose a constant $t_2>0$ such that, for $0\leq t\leq t_2$,
\begin{equation*}
\int_{\R^n}\p_tu(t,x)\psi(t,x)\,dx\geq\frac{c_0}{2}>0.
\end{equation*}
Moreover, by the smoothness of $\lambda(t)$ and $\lambda(0)=1$, we can
find a $t_3>0$ such that
\[
t_3^{\frac{m}{2}}\lambda(t_3)\geq c_1,
\]
where $c_1>0$ is some positive constant. Together with (i) and (ii) of
Lemma~\ref{lem2.2}, this yields, for $0\leq t\leq t_3$,
\[
-\lambda'(t)\geq-\lambda'(t_3)=|\lambda'(t_3)|\geq
Ct_3^{\frac{m}{2}}\lambda(t_3)\geq Cc_1.
\]
Then, by the assumption that $u_0\ge 0$ but $u_0\not\equiv 0$, we have
that, for $0\leq t\leq t_3$,
\begin{equation*}
\int_{\R^n}\left(-\partial_t\psi(t,x)u(t,x)\right)dx\geq\frac{c_2}{2}>0,
\end{equation*}
where $c_2$ is a positive constant. Note that
\[
\Delta_x\left(\int_{{\mathbb S}^{n-1}}e^{x\cdot
  \omega}\,d\omega\right)=\int_{{\mathbb S}^{n-1}} \sum_{i=1}^n
\omega_i^2e^{x\cdot \omega}\,d\omega=\int_{{\mathbb S}^{n-1}}e^{x\cdot
  \omega}\,d\omega.
\]
Let $t_4=\min\{t_1,t_2,t_3\}>0$. Then it follows from a direct
computation that, for $t>t_4$,
\begin{align*}
\int_{t_4}^t\int_{\R^n}|u|^p\psi\, dxds
&=\int_{t_4}^t\int_{\R^n}\left(\partial_s^2u-s^m\Delta u\right)\psi\, dxds \\
&=\int_{\R^n}\left(\psi\partial_s u-u\partial_s\psi\right)dx\biggr|_{s=t}
-\int_{\R^n}\left(\psi\partial_s u-u\partial_s\right)dx\biggr|_{s=t_4},
\end{align*}
which leads to
\begin{equation*}
\int_{\R^n}\left(\psi\partial_s u-u\partial_s\psi\right)dx\biggr|_{s=t}
\geq\int_{\R^n}\left(\psi\partial_s u-u\partial_s\psi\right)dx\biggr|_{s=t_4}
\geq c\equiv \frac{c_0}{2}+\frac{c_2}{2}.
\end{equation*}
This also yields
\begin{equation}\label{equ:2.13}
\begin{aligned}
G'_1(t)-2\lambda'(t)\int_{\R^n}u\varphi\, dx
&=\frac{d}{dt}\left(\int_{\R^n}u\psi\, dx\right)-2\int_{\R^n}u\partial_t\psi\, dx \\
&=\int_{\R^n}\left(\psi\partial_s u-u\partial_s\psi\right)dx\biggr|_{s=t} \geq c.
\end{aligned}
\end{equation}
Now assume that there is a constant $t_5>t_4$ such $G_1(t_5)=0$, but
$G_1(t)>0$ for $t_4\leq t<t_5$.  Then, for $t_4\leq t\leq t_5$,
\[
\lambda(t)\int_{\R^n}u(t,x)\varphi(x)\,dx=\int_{\R^n}u(t,x)\psi(t,x)\,dx=G_1(t)\geq 0.
\]
Together with Lemma 2.2 (i), this yields that for $t_4\leq t\leq t_5$,
\[
\int_{\R^n}u(t,x)\varphi(x)\,dx\geq 0.
\]
Furthermore, by Lemma~\ref{lem2.2} (ii), one has
\[
-\lambda'(t)=|\lambda'(t)|\leq C\lambda(t)t^{\frac{m}{2}}.
\]
Together with (2.17), this yields
\begin{equation}\label{equ:2.14}
G'_1(t)+Ct^{\frac{m}{2}}G_1(t) \geq
G'_1(t)-2\lambda'(t)\int_{\R^n}u\varphi\, dx \geq c.
\end{equation}
Without loss of generality, we can assume that $c=1$ in
\eqref{equ:2.14}. Then, by solving \eqref{equ:2.14}, we get that, for
$t_4\leq t\leq t_5$,
\begin{equation}\label{equ:2.15}
e^{C\phi(t)}G_1(t)\geq
e^{C\phi(t_4)}G_1(t_4)+\frac{t^{-\frac{m}{2}}}{C}
\left(e^{C\phi(t)}-e^{C\phi(t_4)}\right).
\end{equation}
Therefore, $G_1(t_5)>0$ holds which is a contradiction to $G_1(t_5)=0$.

Thus, we have that, for all $t\ge t_4$,
\[
G_1(t)>0.
\]
Using Lemma~\ref{lem2.2} (ii) again and repeating the argument from
above, one easily obtains the existence of a uniform positive constant
$\tilde{C}$ such that for  $t\ge t_4$
\[
G_1(t)\geq \tilde{C}\,t^{-\frac{m}{2}}.
\]
This proves Lemma~\ref{lem2.3}.
\end{proof}

Relying on Lemma~\ref{lem2.3}, we are now able to prove
Theorem~\ref{thm1.1}.

\begin{proof}[Proof of Theorem~\ref{thm1.1}]
By \eqref{equ:2.5} and \eqref{equ:2.6}, we have that
\[
\lambda(t)\sim t^{-\frac{m}{4}}e^{-\phi(t)} \quad \text{as $t\rightarrow \infty$.}
\]
Next we estimate the denominator $\left(\int_{|x|\leq
  M+\phi(t)}\psi(t,x)^{\frac{p}{p-1}} \, dx\right)^{p-1}$ in
\eqref{equ:2.11}. Note that
\begin{equation*}
\left(\int_{|x|\leq
  M+\phi(t)}\psi(t,x)^{\frac{p}{p-1}}\,dx\right)^{p-1}
=\lambda(t)^p\left(\int_{|x|\leq
  M+\phi(t)}\varphi(x)^{\frac{p}{p-1}}\,dx\right)^{p-1}
\end{equation*}
and
\[
|\varphi(x)|\leq C_n\left(1+|x|\right)^{-\frac{n-1}{2}}e^{|x|}.
\]
Then
\begin{multline*}
\int_{|x|\leq M+\phi(t)}\varphi(x)^{\frac{p}{p-1}}\,dx \\
\begin{aligned}
&\le C\int_0^{\frac{M+\phi(t)}{2}}(1+r)^{n-1-\frac{n-1}{2}\cdot\frac{p}{p-1}}e^{\frac{p}{p-1}r}\,dr
+C\int_{\frac{M+\phi(t)}{2}}^{M+\phi(t)}(1+r)^{n-1-\frac{n-1}{2}\cdot\frac{p}{p-1}}e^{\frac{p}{p-1}r}\,dr \\
&\leq C e^{\frac{M+\phi(t)}{2}}+\left(M+\phi(t)\right)^{n-1-\frac{n-1}{2}\cdot\frac{p}{p-1}}
e^{p\left(M+\phi(t)\right)} \\
&\leq C\left(M+\phi(t)\right)^{n-1-\frac{n-1}{2}\cdot\frac{p}{p-1}}
e^{p\left(M+\phi(t)\right)}
\end{aligned}
\end{multline*}
and
\begin{equation}\label{equ:2.16}
\begin{aligned}
\left(\int_{|x|\leq M+\phi(t)}\psi(t,x)^{\frac{p}{p-1}}\,dx\right)^{p-1}
&\leq Ct^{-\frac{m}{4}p}e^{-p\phi(t)}\left(M+\phi(t)\right)^{(n-1)(p-1)-\frac{n-1}{2}p}
e^{p\left(M+\phi(t)\right)} \\
&\leq Ct^{-\frac{m}{4}p}\left(M+\phi(t)\right)^{(n-1)(p-1)-\frac{n-1}{2}p}.
\end{aligned}
\end{equation}
Therefore, it follows from \eqref{equ:2.11} and \eqref{equ:2.16} that, for $t\ge t_0$,
\begin{equation}\label{equ:2.17}
G''(t)\geq ct^{-\frac{m}{4}p}\left(M+\phi(t)\right)^{\frac{n-1}{2}p-(n-1)(p-1)}
\ge Ct^{\frac{p}{2}}\left(M+\phi(t)\right)^{n-1-\frac{n}{2}p}.
\end{equation}
Integrating \eqref{equ:2.17} twice gives
\[
G(t)\geq C\left(M+t\right)^{\frac{p}{2}+2+\frac{m+2}{2}\left(n-1-\frac{n}{2}\,p\right)}
+C_1\left(t-t_0\right)+C_2.
\]
Note that if
\begin{equation}\label{equ:2.18}
\frac{p}{2}+2+\frac{m+2}{2}\left(n-1-\frac{n}{2}\,p\right)>1
\end{equation}
holds, then one has, for $t\ge t_0$,
\begin{equation}
G(t)\geq C\left(M+t\right)^{\frac{p}{2}+2+\frac{m+2}{2}
  \left(n-1-\frac{n}{2}\,p\right)}.
\end{equation}
This means that condition \eqref{equ:2.1} holds with
$\al=\frac{p}{2}+2+\frac{m+2}{2}(n-1-\frac{n}{2}p)$.

To conclude the proof of Theorem~\ref{thm1.1} we now apply
Lemma~\ref{lem2.1}.  For $n\geq3$, one easily checks that all
$p<p_{\conf}(m,n)$ satisfy \eqref{equ:2.18}.  On the other hand, if we
take
\[
\alpha=\frac{p}{2}+2+\frac{m+2}{2}\left(n-1-\frac{n}{2}\,p\right), \quad
q=\frac{m+2}{2}\,n\left(p-1\right),
\]
then the condition $(p-1)\alpha>q-2$ in Lemma~\ref{lem2.1} becomes
\[
(p-1)\left(\frac{p}{2}+2+\frac{m+2}{2}\left(n-1-\frac{n}{2}\,p\right)\right)
>\frac{m+2}{2}\,n\left(p-1\right)-2,
\]
which is equivalent to
\[
\left((m+2)\,\frac{n}{2}-1\right)p^2+\left((m+2)\left(1-\frac{n}{2}\right)
-3\right)p-(m+2)<0.
\]
The latter means that
\[
p<p_{\crit}(m,n) =\frac{\left(\frac{n}{2}-1\right)(m+2)
  +3+\sqrt{\left(\frac{n^2}{4}+n+1\right)(m+2)^2+(3n-10)(m+2)
    +9}}{\left(m+2\right)n-2}.
\]
By a direct verification, we have that $p_{\crit}(m,n)$ satisfies
\eqref{equ:crit} and that $p_{\crit}(m,n)<p_{\conf}(m,n)$ holds.

We complete the proof of Theorem~\ref{thm1.1} by appealing to
Lemma~\ref{lem2.1} with $a=t_0$ and $b=t$.
\end{proof}


\section{Strichartz estimates for the generalized Tricomi operator}\label{sec3}

Before establishing Strichartz estimates for the generalized Tricomi
operator, we recall two results from \cite[Lemma~3.8]{Gv} and
\cite[Theorem~1.2]{Chr1}.

\begin{lemma}\label{lem3.1}
Let $\ds \beta\in C_0^\infty\left((1/2,2)\right)$ and
$\ds \sum\limits_{j=-\infty}^\infty\beta\left(2^{-j}\tau\right) \equiv1$ for
$\tau>0$.  Define the Littlewood-Paley operators as
\[
G_j(t,x)=(2\pi)^{-n}\int_{\R^n}e^{ix\cdot\xi}\beta\left(2^{-j}|\xi|\right)
\hat{G}(t,\xi)\,d\xi, \quad j\in \Z.
\]
Then
\begin{align*}
\| G\|_{L^s_tL^q_x} \leq C\left(\sum\limits_{j=-\infty}^{\infty}\| G_j
\|^2_{L^s_tL^q_x}\right)^{1/2},& \quad
2\leq q<\infty,\, 2\leq s \leq \infty, \\
\intertext{and}
\left(\sum\limits_{j=-\infty}^{\infty}\| G_j\|^2_{L^r_tL^p_x}\right)^{1/2}
\leq C\| G\|_{L^r_tL^p_x},& \quad 1<p\leq2,\, 1\leq r \leq 2.
\end{align*}
\end{lemma}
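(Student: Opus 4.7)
My plan is to derive both inequalities by combining the classical Littlewood-Paley square function estimate on $\R^n$ with the generalized Minkowski inequality for mixed-norm spaces. Since the statement is attributed to \cite[Lemma 3.8]{Gv}, the argument is essentially a textbook one, and the main work is tracking Minkowski's inequality in the correct direction in each case.

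For the first inequality in the range $2\leq q<\infty$ and $2\leq s\leq\infty$, I would begin with the square function estimate in the spatial variable at fixed time,
\begin{equation*}
\|G(t,\cdot)\|_{L^q_x} \leq C \left\|\left(\sum_{j\in\Z} |G_j(t,\cdot)|^2\right)^{1/2}\right\|_{L^q_x},
\end{equation*}
which holds for every $1<q<\infty$ by the classical vector-valued Calder\'on-Zygmund theory for the family of frequency-localized multipliers $\beta(2^{-j}|\xi|)$. Taking the $L^s_t$ norm and then invoking Minkowski's inequality first in the space variable with exponent $q/2\geq 1$ and then in the time variable with exponent $s/2\geq 1$ pulls the $\ell^2_j$ sum outside of the mixed norm, producing the right-hand side $(\sum_j\|G_j\|_{L^s_tL^q_x}^2)^{1/2}$.

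For the second inequality in the range $1<p\leq 2$ and $1\leq r\leq 2$, the starting point is the reverse square function estimate
\begin{equation*}
\left\|\left(\sum_{j\in\Z} |G_j(t,\cdot)|^2\right)^{1/2}\right\|_{L^p_x} \leq C\,\|G(t,\cdot)\|_{L^p_x}, \qquad 1<p<\infty,
\end{equation*}
again from standard Littlewood-Paley theory. The remaining task is to compare $(\sum_j\|G_j\|_{L^r_tL^p_x}^2)^{1/2}$ with $\|(\sum_j |G_j|^2)^{1/2}\|_{L^r_tL^p_x}$. This is precisely the mixed-norm inequality
\begin{equation*}
\|\,\cdot\,\|_{\ell^2_j L^r_t L^p_x} \leq \|\,\cdot\,\|_{L^r_t L^p_x \ell^2_j},
\end{equation*}
and by Minkowski's integral inequality it is valid whenever the outer exponents do not exceed the inner one (which here is $2$). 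The hypotheses $r\leq 2$ and $p\leq 2$ are exactly this requirement.

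The main technical obstacle, minor as it is, lies in keeping straight the direction of Minkowski's inequality and matching it against the correct constraints on the exponents $p,q,r,s$; once these are in place the proof is essentially bookkeeping. No new ideas beyond the classical Littlewood-Paley machinery and the standard exchange of iterated norms are needed, which is consistent with the lemma's citation status.
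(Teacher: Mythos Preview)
Your argument is correct and is indeed the standard proof: the spatial Littlewood--Paley square function estimate combined with Minkowski's inequality in the appropriate direction, with the exponent constraints $q,s\geq 2$ (respectively $p,r\leq 2$) being exactly what is needed to interchange the $\ell^2_j$ sum with the mixed $L^s_tL^q_x$ (respectively $L^r_tL^p_x$) norm. Note that the paper does not actually supply a proof of this lemma; it merely quotes the result from \cite[Lemma~3.8]{Gv}, so there is no paper-internal proof to compare against, and your outline is precisely the argument one finds in that reference.
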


\begin{lemma}\label{lem3.2}
Suppose that $1\leq p<q\leq\infty$. Let $T: L^p(\R)\rightarrow
L^q(\R)$ be a bounded linear operator which is defined by
\[
Tf(x)=\int_{\R}K(x,y)f(y)dy,
\]
where $K$ is locally integrable. Define
\[
\tilde{T}f(x)=\int_{-\infty}^x K(x,y)f(y)dy.
\]
Then
\[
\|\tilde{T}f\|_{L^q}\leq C_{p,q}\,\| T\|_{L^p\to L^q}\,\| f\|_{L^p}.
\]
\end{lemma}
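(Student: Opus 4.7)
The plan is to prove Lemma~\ref{lem3.2} (the Christ--Kiselev lemma) via an atomic decomposition of $f$ adapted to the distribution of its $|f|^p$-mass. By homogeneity I may assume $\|T\|_{L^p\to L^q}=\|f\|_{L^p}=1$. The strategy is to reduce the truncated operator $\tilde T$ to a sum of operators built from $T$ itself acting on disjoint intervals, and to exploit the gap $1/p - 1/q > 0$ to obtain a geometric series.

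The first step is the dyadic construction. Setting $F(y)=\int_{-\infty}^{y}|f(t)|^p\,dt$, a continuous nondecreasing function from $\R$ onto $[0,1]$, I define
\[
I_{k,j}:=F^{-1}\bigl([\,j2^{-k},(j+1)2^{-k}\,]\bigr),\qquad k\ge 0,\ 0\le j\le 2^k-1.
\]
Each $I_{k,j}$ is an interval with $\int_{I_{k,j}}|f|^p=2^{-k}$, and at every level $k$ the collection $\mathcal I_k=\{I_{k,j}\}_j$ partitions $\R$ (up to a null set). Every $I_{k-1,j}$ splits into a left child $I_{k,2j}$ and a right child $I_{k,2j+1}$; given a right child $I\in\mathcal I_k$, write $I^{\ell}$ for its (left) sibling.

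The combinatorial core of the argument is: for any pair $y<x$, there is a unique smallest level $k^{\ast}\ge 1$ at which $x$ and $y$ lie in distinct intervals of $\mathcal I_{k^{\ast}}$; since $y<x$, at that level $x$ must sit in a right child $I$ and $y$ in its left sibling $I^{\ell}$. This yields
\[
\tilde Tf(x)=\sum_{k\ge 1}T_k f(x),\qquad T_k f(x):=\sum_{\substack{I\in\mathcal I_k\\ I\text{ right child}}}\chi_I(x)\,T(f\chi_{I^{\ell}})(x).
\]
For fixed $k$ the $x$-supports $\{I\}$ are pairwise disjoint, and there are $2^{k-1}$ such intervals, so for $q<\infty$
\[
\|T_k f\|_{L^q}^{\,q}\le\sum_{I}\|T(f\chi_{I^{\ell}})\|_{L^q}^{\,q}\le\sum_{I}\|f\chi_{I^{\ell}}\|_{L^p}^{\,q}=2^{k-1}\cdot 2^{-kq/p},
\]
giving $\|T_k f\|_{L^q}\le C\,2^{-k(1/p-1/q)}$; the case $q=\infty$ is handled identically but with a supremum replacing the sum. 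Since $p<q$, summing the resulting geometric series over $k\ge 1$ produces $\|\tilde Tf\|_{L^q}\le C_{p,q}$.

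The main obstacle I anticipate is purely organizational: carefully verifying in the combinatorial step that every pair $(x,y)$ in the triangular region $\{y<x\}$ is represented exactly once, and that the pointwise disjointness of the $x$-supports at level $k$ does justify splitting $\|T_k f\|_{L^q}^{\,q}$ as a sum. Degenerate situations (intervals $I_{k,j}$ with empty interior where $F$ is constant, atoms in $|f|^p\,dx$, or the endpoint $q=\infty$) can be handled by first proving the inequality for $f$ in a dense class of nice functions with no level-set anomalies and then passing to a limit, since the final constant $C_{p,q}$ depends only on $p$ and $q$.
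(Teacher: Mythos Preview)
Your argument is correct: this is precisely the standard Christ--Kiselev proof via a dyadic (Whitney-type) decomposition of the triangle $\{y<x\}$ adapted to the $|f|^p$-distribution function, and the geometric gain $2^{-k(1/p-1/q)}$ from $p<q$ is exactly what makes the series converge. The minor technical points you flag (flat stretches of $F$, endpoint overlaps, $q=\infty$) are genuine but routine, and your proposed remedy of working first with a dense class and then passing to the limit is the usual way they are disposed of.

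There is nothing to compare against in the paper: Lemma~\ref{lem3.2} is not proved there but simply quoted from \cite[Theorem~1.2]{Chr1}. Your write-up is essentially the proof one finds in that reference (and in the later Christ--Kiselev paper), so in effect you have reproduced the source the authors cite.
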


In order to prove Theorem~\ref{thm1.2}, we need to establish
Strichartz estimates for the operator $\partial_t^2-t^m\triangle$. To
this end, we study the linear Cauchy problem
\begin{equation}\label{equ:3.1}
\left\{ \enspace
\begin{aligned}
&\partial_t^2 u-t^m\triangle u=F(t,x), && (t,x)\in\R_+^{n+1},\\
&u(0,\cdot)=f(x),\quad \partial_tu(0,\cdot)=g(x).
\end{aligned}
\right.
\end{equation}
Note that the solution $u$ of \eqref{equ:3.1} can be written as
\[
u(t,x)=v(t,x)+w(t,x),
\]
where $v$ solves the homogeneous problem
\begin{equation}\label{equ:3.2}
\left\{ \enspace
\begin{aligned}
&\partial_t^2 v-t^m\triangle v=0, && (t,x)\in\R_+^{n+1},\\
&v(0,\cdot)=f(x),\quad \partial_tv(0,\cdot)=g(x).
\end{aligned}
\right.
\end{equation}
and $w$ solves the inhomogeneous problem with zero initial data
\begin{equation}\label{equ:3.3}
\left\{ \enspace
\begin{aligned}
&\partial_t^2 w-t^m\triangle w=F(t,x), && (t,x)\in\R_+^{n+1},\\
&w(0,\cdot)=0,\quad \partial_tw(0,\cdot)=0.
\end{aligned}
\right.
\end{equation}
Let $\dot{H}^s(\R^n)$ denote the homogeneous Sobolev space with norm
\[
\| f\|_{\dot{H}^s(\R^n)}=\left\| |D_x|^s f\right\|_{L^2(\R^n)},
\]
where
\[
|D_x|=\sqrt{-\Delta}.
\]
If $g \equiv 0$ in \eqref{equ:3.2}, we intend to establish
the Strichartz-type inequality
\[
\| v\|_{L^q_tL^r_x}\leq C\,\| f\|_{\dot{H}^s(\R^n)},
\]
where $q\ge 1$ and $r\ge 1$ are suitable constants related to $s$.
One obtains by a scaling argument that those indices should satisfy
\begin{equation}\label{equ:3.4}
  \frac{1}{q}+\frac{m+2}{2}\cdot\frac{n}{r}
  =\frac{m+2}{2}\left(\frac{n}{2}-s\right).
\end{equation}
Setting $r=q$ and $s=\frac{1}{m+2}$ in \eqref{equ:3.4}, we find that
\begin{equation}\label{equ:3.5}
q=q_0\equiv \frac{2((m+2)n+2)}{(m+2)n-2}>1, \quad n\ge 2,\,m\in\N.
\end{equation}
Note that problem~\eqref{equ:original} is ill-posed for
$u_0\in H^s(\R^n)$ with $s<\f{n}{2}-\f{4}{(m+2)(p-1)}$ (see
Remark~\ref{rem1.4}), while $p\ge p_{\conf}(m,n)$ and
$s=\f{n}{2}-\f{4}{(m+2)(p-1)}$ imply $s\ge\f{1}{m+2}$.

We now prove:

\begin{lemma}\label{lem3.3}
Let $n\geq2$ and $v$ solve problem \eqref{equ:3.2}. Further let $
\frac{1}{m+2}\le s<\f{n}{2}$. Then
\begin{equation}\label{equ:3.6}
\| v\|_{L^q(\R^{n+1}_+)} \leq C\left(\| f\|_{\dot{H}^s(\R^n)}+\|
g\|_{\dot{H}^{s-\frac{2}{m+2}}(\R^n)}\right),
\end{equation}
where $q=\f{2((m+2)n+2)}{(m+2)(n-2s)}\ge q_0$ and the constant
$C>0$ only depends on $m$, $n$, and $s$.
\end{lemma}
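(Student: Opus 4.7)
The plan is to represent $v$ as a sum of Fourier integral operators with phase $x\cdot\xi\pm\phi(t)|\xi|$, where $\phi(t)=\tfrac{2}{m+2}t^{(m+2)/2}$, and then reduce the target $L^q_{t,x}$ estimate to a frequency-localized dispersive bound via a Littlewood-Paley decomposition. Applying the Fourier transform in $x$ to \eqref{equ:3.2} yields the ODE $\hat V''+t^m|\xi|^2\hat V=0$, which under the substitution $\tau=\phi(t)|\xi|$ becomes a Bessel equation of order $\tfrac{1}{m+2}$. Choosing the pair of solutions corresponding to the Hankel functions $H^{(1,2)}_{1/(m+2)}$ and invoking the type of asymptotic expansion already exploited in \eqref{equ:2.5}--\eqref{equ:2.6}, one obtains in the hyperbolic regime $\phi(t)|\xi|\gtrsim 1$ the representation
\[
v(t,x)=\sum_{\pm}\int_{\R^n}e^{ix\cdot\xi\pm i\phi(t)|\xi|}a_\pm(t,\xi)\hat f(\xi)\,d\xi+\text{(analogous term for $g$)},
\]
with symbols $a_\pm$ satisfying $|a_\pm(t,\xi)|\lesssim t^{-m/4}|\xi|^{-m/(2(m+2))}$ together with the natural symbolic bounds on $\p_\xi^\alpha a_\pm$. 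In the complementary region $\phi(t)|\xi|\lesssim 1$ the Bessel power series furnishes a smooth non-oscillatory remainder which can be absorbed into an energy estimate.

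By Lemma~\ref{lem3.1}, since $q_0>2$, it suffices to prove the dyadic estimate
\[
\|v_j\|_{L^{q_0}(\R^{n+1}_+)}\lesssim 2^{j/(m+2)}\|f_j\|_{L^2(\R^n)}+2^{-j/(m+2)}\|g_j\|_{L^2(\R^n)}
\]
for each Littlewood-Paley piece $v_j$ with Fourier support in $|\xi|\sim 2^j$. The generalized Tricomi operator is invariant under the anisotropic scaling $(t,x)\mapsto(2^{-2j/(m+2)}t,2^{-j}x)$, which reduces the task to the unit-frequency case $j=0$. At this scale a $TT^*$ argument combined with a van der Corput/stationary-phase analysis of the phase $x\cdot\xi\pm\phi(t)|\xi|$---whose angular Hessian at the stationary point has determinant comparable to $\phi(t)^{n-1}/|\xi|^{n-1}$---yields the dispersive estimate
\[
\|v_0(t,\cdot)\|_{L^\infty_x}\lesssim t^{-m/4}(1+\phi(t))^{-(n-1)/2}\|\widehat{f_0}\|_{L^1},
\]
and an analogous inequality for the $g$-contribution. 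Interpolating with the $L^2$-conservation bound $\|v_0(t,\cdot)\|_{L^2_x}\lesssim\|f_0\|_{L^2}+\|g_0\|_{\dot H^{-2/(m+2)}}$ and integrating in $t$ via $d\phi=t^{m/2}dt$ (which is where the extra time-weight is absorbed) produces the endpoint exponent $q_0$ of \eqref{equ:3.5}.

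Summation over $j$ with the first inequality of Lemma~\ref{lem3.1} together with Plancherel yields the endpoint case $s=\tfrac{1}{m+2}$, $q=q_0$. For a general admissible pair $(s,q)$ in the range $\tfrac{1}{m+2}\le s<\tfrac{n}{2}$, the same Littlewood-Paley/scaling mechanism applies, the only modification being that the unit-frequency $L^{q_0}$-bound is upgraded to the corresponding $L^q$-bound by Sobolev embedding in $x$, which is exactly what the extra regularity $s-\tfrac{1}{m+2}$ buys.

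The main obstacle will be the analysis near the transition region $\phi(t)|\xi|\sim 1$, where the Bessel-function representation switches from oscillatory to non-oscillatory behavior and where the singular weight $t^{-m/4}$ has to be controlled uniformly. In particular, the unit-scale dispersive estimate must be verified for all $t\ge 0$, which requires carefully splitting according to the size of $\phi(t)|\xi|$ and treating the non-oscillatory piece by direct Hölder or energy bounds (using in particular Lemma~\ref{lem2.2}(i) to absorb $t^{-m/4}$ near $t=0$). Once this technical step is in place, the scaling, $TT^*$, and Littlewood-Paley reassembly steps proceed along standard lines.
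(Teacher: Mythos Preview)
Your outline is essentially correct and shares the paper's skeleton---represent $v$ as an FIO with phase $x\cdot\xi\pm\phi(t)|\xi|$ and amplitude controlled by $(1+\phi(t)|\xi|)^{-m/(2(m+2))}$, then Littlewood--Paley localize and exploit dispersion---but the technical execution differs in a meaningful way.  The paper works with the confluent hypergeometric representation of \cite{Yag2} (rather than Hankel functions, though these are equivalent), passes to the dual $TT^\ast$ formulation $\|AA^\ast G\|_{L^{q}}\le C\|G\|_{L^{p}}$, and then dyadically interpolates between a trivial $L^1\to L^\infty$ bound and the endpoint $L^{p_0}\to L^{q_0}$ bound.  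For the latter the key device is to absorb the temporal weight into the symbol via the factor $\bar t^{\,m/((m+2)n+2)}$ (inequality \eqref{equ:3.29}), after which the two-time kernel obeys the decay $|t-\tau|^{-((m+2)n-2)/((m+2)n+2)}$ and Hardy--Littlewood--Sobolev in $t$ closes the estimate.  Your route instead rescales each dyadic piece to unit frequency and then integrates the fixed-time bound $\|v_0(t,\cdot)\|_{L^{q_0}_x}\lesssim\bigl(t^{-m/4}(1+\phi(t))^{-(n-1)/2}\bigr)^{1-2/q_0}$ directly in $t$; since this decays like $t^{-2}$ for large $t$, the $L^{q_0}_t$ integral converges with no need for HLS or the $\bar t$-trick.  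Both methods are valid; yours is a bit more elementary for the diagonal $L^q_{t,x}$ case, while the paper's $TT^\ast$/HLS organization is closer to the standard Strichartz machinery and adapts more readily to the inhomogeneous estimate of Lemma~\ref{lem3.4}.

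Two points of exposition to tighten.  First, the phrase ``a $TT^\ast$ argument \dots yields the dispersive estimate'' is misleading: the single-time $L^1_x\to L^\infty_x$ bound you write down comes from stationary phase alone, and your subsequent ``integrate in $t$'' step bypasses $TT^\ast$ entirely (which is fine, but say so).  Second, for general $s\in[\tfrac{1}{m+2},\tfrac{n}{2})$ the move is not really ``Sobolev embedding in $x$'': the time exponent also changes from $q_0$ to $q$, so what you actually do is rerun the unit-frequency interpolation at exponent $q$ and observe that the resulting time integrand is even more integrable.  Finally, the reference to Lemma~\ref{lem2.2}(i) is off---that lemma concerns the \emph{modified} Bessel function $K_\nu$ used in the blowup argument, not the oscillatory Hankel functions relevant here; the small-$t$ piece is handled simply by the boundedness of the confluent-hypergeometric/Bessel power series on $\{\phi(t)|\xi|\lesssim1\}$, exactly as the paper does via \eqref{equ:3.14}.
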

\begin{proof}
It follows from \cite{Yag2} that the solution $v$ of \eqref{equ:3.2}
can be written as
\[
v(t,x)=V_1(t, D_x)f(x)+V_2(t, D_x)g(x),
\]
where the operators $V_j(t, D_x)$ ($j=1,2$) have the symbols $V_j(t,
\xi)$ given by
\begin{multline}\label{equ:3.7}
  V_1(t,\xi)=V_1(t,|\xi|)=\frac{\Gamma\bigl(\frac{m}{m+2}\bigr)}{
    \Gamma\bigl(\frac{m}{2(m+2)}\bigr)}\,e^{\frac{z}{2}}\,
  H_+\left(\frac{m}{2(m+2)},\frac{m}{m+2};z\right) \\
  +\,\frac{\Gamma\bigl(\frac{m}{m+2}\bigr)}{
    \Gamma\bigl(\frac{m}{2(m+2)}\bigr)}\,e^{-\frac{z}{2}}\,
  H_-\left(\frac{m}{2(m+2)},\frac{m}{m+2};z\right)
\end{multline}
and
\begin{multline}\label{equ:3.8}
  V_2(t,\xi)=V_2(t,|\xi|)=t\,\frac{\Gamma\bigl(\frac{m+4}{m+2}\bigr)}{
    \Gamma\bigl(\frac{m+4}{2(m+2)}\bigr)}\,e^{\frac{z}{2}}\,
  H_+\left(\frac{m+4}{2(m+2)},\frac{m+4}{m+2};z\right) \\
  +\,t\,\frac{\Gamma\bigl(\frac{m+4}{m+2}\bigr)}{
    \Gamma\bigl(\frac{m+4}{2(m+2)}\bigr)}\,e^{-\frac{z}{2}}\,
  H_-\left(\frac{m+4}{2(m+2)},\frac{m+4}{m+2};z\right),
\end{multline}
where $z=2i\phi(t)|\xi|$.
For $\al,\, \nu\in\R$, $\o\in\mathbb C$, we have
\begin{align*}
  H_+(\al,\nu; \o)&=\f{e^{-i\pi(\nu-\al)}}{e^{i\pi(\nu-\al)}-e^{-i\pi(\nu-\al)}}\,
  \f{1}{\G(\nu-\al)}\,\o^{\al-\nu}
\int_{\infty}^{(0+)}e^{-\th}\th^{\nu-\al-1}\left(1-\f{\th}{\o}\right)^{\al-1}\,d\th,\\
H_-(\al,\nu; \o)&=\f{1}{e^{i\pi\al}-e^{-i\pi\al}}\, \f{1}{\G(\al)}\, \o^{-\al}
\int_{\infty}^{(0+)}e^{-\th}\th^{\al-1}\left(1+\f{\th}{\o}\right)^{\nu-\al-1}\,d\th.
\end{align*}
By \cite[Section~3]{Yag2}, one has that, for $\phi(t)|\xi|\geq 1$,
\begin{align}
  \bigl| \partial_\xi^\beta H_{+}\left(\alpha,\gamma;2i\phi(t)|\xi|\right)
  \bigr|&\leq C\left(\phi(t)|\xi|\right)^{\alpha-\gamma}
  \left(1+|\xi|^2\right)^{-\f{|\beta|}{2}},
\label{equ:3.9} \\
\bigl| \partial_\xi^\beta H_{-}\left(\alpha,\gamma;2i\phi(t)|\xi|\right)\bigr|
&\leq C\left(\phi(t)|\xi|\right)^{-\alpha}
\left(1+|\xi|^2\right)^{-\f{|\beta|}{2}}. \label{equ:3.10}
\end{align}
We only estimate $V_1$, since estimating $V_2$ is similar. Indeed, up
to a factor of $t\,\phi(t)^{-\frac{m+4}{2(m+2)}}=
C_m\phi(t)^{-\frac{m}{2(m+2)}}$, the powers of $t$ appearing in $V_1$
or $V_2$ are the same.

Choose $\chi\in C^\infty(\R_+)$ such that
\begin{equation}\label{equ:3.11}
\chi(s)=
\begin{cases}
1, &s\geq2, \\
0, &s\leq1.
\end{cases}
\end{equation}
Then
\begin{equation}\label{equ:3.12}
\begin{aligned}
  V_1(t,|\xi|)\hat{f}(\xi)&=\chi(\phi(t)|\xi|)V_1(t,|\xi|)\hat{f}(\xi)
  +(1-\chi(\phi(t)|\xi|))V_1(t,|\xi|)\hat{f}(\xi)
\\ &\equiv\hat{v}_1(t,\xi)+\hat{v}_2(t,\xi).
\end{aligned}
\end{equation}
Using \eqref{equ:3.7}, \eqref{equ:3.9}, and \eqref{equ:3.10}, we
derive that
\begin{equation}\label{equ:3.13}
{v}_1(t,x)=C_m\left(\int_{\R^n}e^{i\left(x\cdot\xi+\phi(t)|\xi|\right)}
a_{11}(t,\xi)\hat{f}(\xi)\,d\xi+
\int_{\R^n}e^{i\left(x\cdot\xi-\phi(t)|\xi|\right)}
a_{12}(t,\xi)\hat{f}(\xi)\,d\xi\right),
\end{equation}
where $C_m>0$ is a constant only depending on $m$, and, for $l=1,2$,
\begin{equation*}
\bigl| \partial_\xi^\beta a_{1l}(t,\xi)\bigr|\leq
C_{l\beta}\,|\xi|^{-|\beta|}\left(\phi(t)|\xi|\right)^{-\f{m}{2(m+2)}}.
\end{equation*}
On the other hand, it follows from \cite{Erd1} that
\begin{equation}\label{equ:3.14}
V_1(t,|\xi|)=e^{-\frac{z}{2}}\,\Phi\left(\frac{m}{2(m+2)},\frac{m}{m+2};z\right),
\end{equation}
where $\Phi$ is the confluent hypergeometric functions which is
analytic with respect to the variable $z=2i\phi(t)|\xi|$.
Then
\begin{equation*}
\left|\partial_\xi\bigl((1-\chi(\phi(t)|\xi|))V_1(t,|\xi|)\bigr)\right|\leq
C\left(1+\phi(t)|\xi|\right)^{-\f{m}{2(m+2)}}|\xi|^{-1}.
\end{equation*}
Similarly, one has
\begin{equation*}
  \left|\partial_\xi^{\beta}\bigl((1-\chi(\phi(t)|\xi|))V_1(t,|\xi|)\bigr)
  \right|\leq
C\left(1+\phi(t)|\xi|\right)^{-\f{m}{2(m+2)}}|\xi|^{-|\beta|}.
\end{equation*}
Thus, we arrive at
\begin{equation}\label{equ:3.15}
v_2(t,x)=C_m\left(\int_{\R^n}e^{i\left(x\cdot\xi+\phi(t)|\xi|\right)}
a_{21}(t,\xi)\hat{f}(\xi)\,d\xi +\int_{\R^n}
e^{i\left(x\cdot\xi-\phi(t)|\xi|\right)}
a_{22}(t,\xi)\hat{f}(\xi)\,d\xi\right),
\end{equation}
where, for $l=1,2$,
\begin{equation*}
\bigl| \partial_\xi^\beta a_{2l}(t,\xi)\bigr|\leq
C_{l\beta}\left(1+\phi(t)|\xi|\right)^{-\f{m}{2(m+2)}}|\xi|^{-|\beta|}.
\end{equation*}
Substituting \eqref{equ:3.13} and \eqref{equ:3.15} into
\eqref{equ:3.12} yields
\[
V_1(t,D_x)f(x)=C_m\left(\int_{\R^n}
e^{i\left(x\cdot\xi+\phi(t)|\xi|\right)} a_1(t,\xi)\hat{f}(\xi)\,d\xi
+\int_{\R^n} e^{i\left(x\cdot\xi-\phi(t)|\xi|\right)}a_2(t,\xi)
\hat{f}(\xi)\,d\xi\right),
\]
where the $a_l$ ($l=1,2$) satisfy
\begin{equation}\label{equ:3.16}
  \bigl|\partial_\xi^\beta a_l(t,\xi)\bigr|\leq
  C_{l\beta}\left(1+\phi(t)|\xi|\right)^{-\f{m}{2(m+2)}}|\xi|^{-|\beta|}.
\end{equation}
We only treat the integral
$\int_{\R^n}e^{i\left(x\cdot\xi+\phi(t)|\xi|\right)}
a_1(t,\xi)\hat{f}(\xi)\,d\xi$, since the treatment of the
integral \linebreak
$\int_{\R^n}e^{i\left(x\cdot\xi-\phi(t)|\xi|\right)} a_2(t,\xi)
\hat{f}(\xi)\,d\xi$ is similar. Denote
\begin{equation}\label{equ:3.17}
(Af)(t,x)=\int_{\R^n}e^{i\left(x\cdot\xi+\phi(t)|\xi|\right)}a_1(t,\xi)
  \hat{f}(\xi)\,d\xi.
\end{equation}
We will show that
\begin{equation} \label{equ:3.18}
  \|(Af)(t,x)\|_{L^q(\R^{n+1}_+)}\le C\,\| f\|_{\dot{H}^s(\R^n)}.
\end{equation}
Note that if we set
\[
\tilde{a}(t,\xi)=\frac{a_1(t,\xi)}{|\xi|^s}, \quad
\hat{h}(\xi)=|\xi|^s\hat{f}(\xi),
\]
then \eqref{equ:3.18} is equivalent to
\begin{equation}\label{equ:3.19}
\left\|\int_{\R^n}e^{i\left(x\cdot\xi+\phi(t)|\xi|\right)}
\tilde{a}(t,\xi)\hat{h}(\xi)\,d\xi\right\|_{L^q(\R^{n+1}_+)}\le
C\,\| h\|_{L^2(\R^n)}.
\end{equation}
We denote the the integral operator in the left-hand side of
\eqref{equ:3.19} still by $A$. In order to prove \eqref{equ:3.19} it
suffices to establish the its dual version
\begin{equation}\label{equ:3.20}
\| A^*G\|_{L^2(\R^n)}\le C\, \| G\|_{L^p(\R^{n+1}_+)},
\end{equation}
where
\[
(A^*G)(y)=\int_{\R^n}\int_{\R^{n+1}_+}e^{i\left(y-x)\cdot\xi-\phi(t)|\xi|\right)}\,
\overline{\tilde{a}(t,\xi)}\,G(t,x)\,dtdxd\xi
\]
is the adjoint operator of $A$, $\f{1}{p}+\f{1}{q}=1$, and $
1\leq p\leq p_0\equiv\frac{2((m+2)n+2)}{(m+2)n+6}$ (note that
$\f{1}{p_0}+\f{1}{q_0}=1$). In view of
\begin{equation}\label{equ:3.21}
\int_{\R^n}|(A^*G)(y)|^2\,dy =\int_{\R^{n+1}_+}(A
A^*G)(t,x)\overline{G(t,x)}\,dtdx \leq\| A A^*G \|_{L^{q}(\R^{n+1}_+)}\|\,
G\|_{L^p(\R^{n+1}_+)},
\end{equation}
one derives that \eqref{equ:3.20} holds if
\begin{equation}\label{equ:3.22}
\| A A^*G \|_{L^{q}(\R^{n+1}_+)}\le
C\, \| G\|_{L^p(\R^{n+1}_+)}, \quad 1\leq p\leq p_0.
\end{equation}
One can write
\begin{equation}\label{equ:3.23}
  (A A^*G)(t,x)=\int_{\R^{n+1}_+}\int_{\R^n}
  e^{i\left((\phi(t)-\phi(\tau))|\xi|+(x-y)\cdot\xi\right)}\,
  \tilde{a}(t,\xi)\,\overline{\tilde{a}(\tau,\xi)}G(\tau,y)\,d\xi
  d\tau dy.
\end{equation}
If we choose a function $\beta\in
C_0^\infty((1/2,2))$ as in Lemma~\ref{lem3.1}
and set $a_\lambda(t,\tau,\xi)=\beta(|\xi|/\lambda)\tilde{a}(t,\xi)$
$\overline{\tilde{a}(\tau,\xi)}$ for $\lambda>0$, then we obtain a dyadic
decomposition of the operator $AA^\ast$ by
\begin{equation}\label{equ:3.25}
(AA^*)_\lambda G=\int_{\R^{n+1}_+}\int_{\R^n}
  e^{i\left((\phi(t)-\phi(\tau))|\xi|+(x-y)\cdot\xi\right)}a_\lambda(t,\tau,\xi)
  G(\tau,y)\,d\xi d\tau dy.
\end{equation}
In order to prove \eqref{equ:3.22}, we only need to prove
\begin{equation}\label{equ:3.26}
\|(AA^*)_\lambda G \|_{L^{p'}(\R^{n+1}_+)}\le
C\, \| G\|_{L^p(\R^{n+1}_+)},
\quad 1\leq p\leq p_0,
\end{equation}
with the constant $C>0$ independent of $\lambda>0$. Indeed, if
\eqref{equ:3.26} holds, then it follows from Lemma~\ref{lem3.1} and
$p\leq p_0=\frac{2((m+2)n+2)}{(m+2)n+6}<2$ that
\begin{equation*}
\begin{aligned}
\| AA^* G \|_{L^{q}}^2&\leq C\sum_{j\in\Z}\| (AA^*)_{2^j}G\|_{L^{q}}^2
\leq C\sum_{j\in\Z}\sum_{k:|j-k|\leq C_0}\| (AA^*)_{2^j}G_k\|_{L^{q}}^2 \\
&\leq C\sum_{j\in\Z}\sum_{k:|j-k|\leq C_0}\| G_k\|_{L^{p}}^2
\le C\, \| G\|_{L^p(\R^{n+1}_+)},
\end{aligned}
\end{equation*}
where $\hat{G}_k(\tau,\xi)=\beta(2^{-k}|\xi|)\,\hat{G}(\tau,\xi)$.

Next we prove \eqref{equ:3.26}. We will use interpolation between the
two cases $p=1$ and $p=p_0$.

For $p=1$, a direct analysis shows that
\[
|a_\lambda(t,\tau,\xi)|\leq |\xi|^{-2s}
\]
and
\begin{equation}\label{equ:3.27}
\begin{aligned}
\| (AA^*)_\lambda G \|_{L^\infty(\R^{n+1}_+)}
&\leq \int_{\R^{n+1}_+}\left|\int_{\R^n} e^{i[(\phi(t)-\phi(\tau))|\xi|+(x-y)\cdot\xi]}
a_\lambda(t,\tau,\xi)\,d\xi\right||G(\tau,y)|\, dyd\tau \\
&\leq \int_{\R^{n+1}_+}\left|\int_{\R^n}\beta\left(\frac{|\xi|}{\lambda}\right)
|\xi|^{-2s}\,d\xi\right||G(\tau,y)|\, dyd\tau \\
&\leq C\lambda^{n-2s}\| G\|_{L^1(\R^{n+1}_+)}.
\end{aligned}
\end{equation}
Next we prove the endpoint case $p=p_0$ in \eqref{equ:3.26}. Namely,
we shall show that
\begin{equation}\label{equ:3.28}
\| (AA^*)_\lambda G\|_{L^{q}(\R^{n+1}_+)}\leq
C\lambda^{\frac{2}{m+2}-2s}\left\| G\right\|_{L^{p_0}(\R^{n+1}_+)}.
\end{equation}
Note that, for any $t,\tau\in\R_+$ and $\bar{t}=\max\{t,\tau\}$, one
has that
\begin{equation}\label{equ:3.29}
\left|\partial_\xi^\beta\Bigl(\bar{t}^{\frac{m}{(m+2)n+2}}
a_\lambda(t,\tau,\xi)\Bigr)\right|\leq
|\xi|^{-2s-\frac{2m}{(m+2)((m+2)n+2)}-|\beta|}.
\end{equation}
Indeed, without loss of generality, one can assume that
$t\geq\tau$. Then it follows from \eqref{equ:3.16} and a direct
computation that
\begin{equation*}
\begin{aligned}
\left|\partial_\xi^\beta\Bigl(\bar{t}^{\frac{m}{(m+2)n+2}}a_\lambda(t,\tau,\xi)\Bigr)
\right|
&\leq t^{\frac{m}{(m+2)n+2}}\left(1+\phi(t)|\xi|\right)^{-\f{m}{2(m+2)}}
\left(1+\phi(\tau)|\xi|\right)^{-\f{m}{2(m+2)}}|\xi|^{-|\beta|-2s} \\
&\leq \phi(t)^{\frac{2m}{(m+2)((m+2)n+2)}}\left(\phi(t)
|\xi|\right)^{-\frac{2m}{(m+2)((m+2)n+2)}} |\xi|^{-|\beta|-2s} \\
&\leq |\xi|^{-2s-\frac{2m}{(m+2)((m+2)n+2)}-|\beta|}.
\end{aligned}
\end{equation*}
Set
\begin{equation*}
b(t,\tau,\xi)=\lambda^{2s+\frac{2m}{(m+2)((m+2)n+2)}}\bar{t}^{\frac{m}{(m+2)n+2}}
a_\lambda(t,\tau,\xi).
\end{equation*}
Then
\[
\bigl|\partial_\xi^\beta b(t,\tau,\xi)\bigr|\leq |\xi|^{-|\beta|}
\]
and we can write
\begin{multline*}
(AA^*)_\lambda G=\int_{\R^{n+1}_+}\int_{\R^n}
  e^{i\left((\phi(t)-\phi(\tau))|\xi|+(x-y)\cdot\xi\right)}
  \bar{t}^{\,-\frac{m}{(m+2)n+2}}\lambda^{-2s-\frac{2m}{(m+2)\left((m+2)n+2\right)}}
  \\ \times b(t,\tau,\xi)G(\tau,y)\,d\xi dyd\tau.
\end{multline*}
Introduce the operator
\[
T_{t,\tau}f(x)=\int\int e^{i\left((\phi(t)-\phi(\tau))|\xi|+(x-y)\cdot\xi\right)}
\bar{t}^{\,-\frac{m}{(m+2)n+2}}b(t,\tau,\xi)f(y)\,d\xi dy.
\]
Then, by $\max\{t,\tau\}\geq|t-\tau|$, we have that
\begin{equation}\label{equ:3.30}
\| T_{t,\tau}f\|_{L^2(\R^n)}\leq C\left|t-\tau\right|^{-\frac{m}{(m+2)n+2}}
\left\| f\right\|_{L^2(\R^n)}.
\end{equation}
On the other hand, it follows from the method of stationary phase that
\begin{equation}\label{equ:3.31}
\begin{aligned}
\| T_{t,\tau}f\|_{L^\infty(\R^n)}&\leq
C\lambda^{\frac{n+1}{2}}\bar{t}^{\,-\frac{m}{(m+2)n+2}}
\left|\phi(t)-\phi(\tau)\right|^{-\frac{n-1}{2}}\left\| f\right\|_{L^1(\R^n)} \\
&\leq C\lambda^{\frac{n+1}{2}}\left|t-\tau\right|^{-\frac{m}{(m+2)n+2}}
\left|t-\tau\right|^{-\frac{n-1}{2}\cdot\frac{m+2}{2}}\left\| f\right\|_{L^1(\R^n)}.
\end{aligned}
\end{equation}
Together with \eqref{equ:3.30}, this yields
\begin{equation}\label{equ:3.32}
  \| T_{t,\tau}f\|_{L^{q_0}(\R^n)}\leq C\lambda^{\frac{2(n+1)}{(m+2)n+2}}
  \left|t-\tau\right|^{-\frac{(m+2)n-2}{(m+2)n+2}}\left\| f\right\|_{L^{p_0}(\R^n)}.
\end{equation}
Because of $1-(\frac{1}{p_0}-\frac{1}{q_0})=
\frac{(m+2)n-2}{(m+2)n+2}$, it follows from the
Hardy-Littlewood-Sobolev inequality that
\begin{multline}\label{equ:3.33}
\| (AA^*)_\lambda G\|_{L^{q_0}(\R^{n+1}_+)}
=\left\| \int_0^{\infty}T_{t,\tau}G\,d\tau \right\|_{L^{q_0}(\R^{n+1}_+)}\\
\begin{aligned}
&\leq C\lambda^{-2s-\frac{2m}{(m+2)((m+2)n+2)}}\lambda^{\f{2(n+1)}{(m+2)n+2}}\left\|\int_{\R}
  |t-\tau|^{-\frac{(m+2)n-2}{(m+2)n+2}}\left\| G(\tau,\cdot)\right\|_{L^{p_0}(\R^n)}\,
  d\tau\right\|_{L^{p_0}(\R)} \\
&\leq C\lambda^{-2s+\frac{2}{m+2}}\left\| G\right\|_{L^{p_0}(\R^{n+1}_+)}.
\end{aligned}
\end{multline}
By interpolation between \eqref{equ:3.27} and \eqref{equ:3.33}, we
have that, for $1\leq p\leq p_0$,
\[
\|(AA^*)_\lambda G\|_{L^{q}(\R^{n+1}_+)}\leq
C\lambda^{-2s+2\left(\frac{n}{2}-\frac{(m+2)n+2}{(m+2)q}\right)}\|
G\|_{L^p(\R^{n+1}_+)}.
\]
In particular, choosing $s=\frac{n}{2}-\frac{(m+2)n+2}{(m+2)q}$
yields estimate \eqref{equ:3.18} for $v_1(t,x)$. The same estimate for
$v_2(t,x)$ is analogously obtained.

Thus, the proof of Lemma~\ref{lem3.3} is complete.
\end{proof}

Next we treat the inhomogeneous problem \eqref{equ:3.3}. Based on
Lemmas~\ref{lem3.1} and \ref{lem3.2}, we establish the following
estimate:

\begin{lemma}\label{lem3.4}
Let $n\geq 2$ and $w$ solve \eqref{equ:3.3}. Then
\begin{equation}\label{equ:3.34}
\| w\|_{L^q(\R^{n+1}_+)}\leq C\,\bigl\||D_x|^{\gamma-\frac{1}{m+2}}
F\bigr\|_{L^{p_0}(\R^{n+1}_+)},
\end{equation}
where $\gamma=\frac{n}{2}-\frac{(m+2)n+2}{q(m+2)}$, $q_0\leq
q<\infty$, and the constant $C>0$ only depends on $m$, $n$ and $q$.
\end{lemma}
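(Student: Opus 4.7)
The plan is to combine Duhamel's principle with a $TT^\ast$ argument based on Lemma~\ref{lem3.3}, and then to use the Christ--Kiselev lemma (Lemma~\ref{lem3.2}) to reduce the untruncated integral back to the retarded one. Since $V_1(t,\xi)$ and $V_2(t,\xi)$ from the proof of Lemma~\ref{lem3.3} are two linearly independent real solutions of $u''-t^m|\xi|^2u=0$ with conserved Wronskian identically equal to $1$, Duhamel's formula expresses the solution of \eqref{equ:3.3} as
\[
\hat w(t,\xi)=\int_0^t\bigl[V_2(t,\xi)V_1(\tau,\xi)-V_1(t,\xi)V_2(\tau,\xi)\bigr]\hat F(\tau,\xi)\,d\tau.
\]

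Next, I expand the Duhamel kernel via the half-wave decomposition $V_j(t,\xi)=\sum_{\pm}a_j^{\pm}(t,\xi)e^{\pm i\phi(t)|\xi|}$ read off from the proof of Lemma~\ref{lem3.3}: the amplitudes $a_1^\pm$ obey the symbol bound \eqref{equ:3.16}, while $a_2^\pm$ obey the same bound times an additional $|\xi|^{-2/(m+2)}$-gain (reflecting the extra $|\xi|$-decay of $V_2$ relative to $V_1$). The kernel decomposes into a finite sum of pieces of the form $\alpha(t,\xi)\beta(\tau,\xi)e^{i(\sigma_1\phi(t)+\sigma_2\phi(\tau))|\xi|}$ with $\sigma_j\in\{+,-\}$, in which exactly one of $\alpha,\beta$ is an $a_1$-type symbol and the other an $a_2$-type symbol. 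I first estimate the \emph{untruncated} operator $\tilde w$ obtained by replacing $\int_0^t$ with $\int_0^\infty$. Writing $F=|D_x|^{-\gamma+1/(m+2)}G$ redistributes the factor $|\xi|^{\kappa_A+\kappa_B}=|\xi|^{-\gamma+1/(m+2)}$ between the two amplitudes, and I choose $\kappa_A,\kappa_B$ (two algebraic cases depending on which of $\alpha,\beta$ carries the $|\xi|^{-2/(m+2)}$-gain) so that
\begin{align*}
|\alpha(t,\xi)|\,|\xi|^{\kappa_A}&\le C(1+\phi(t)|\xi|)^{-m/(2(m+2))}|\xi|^{-\gamma},\\
|\beta(\tau,\xi)|\,|\xi|^{\kappa_B}&\le C(1+\phi(\tau)|\xi|)^{-m/(2(m+2))}|\xi|^{-1/(m+2)}.
\end{align*}
By Lemma~\ref{lem3.3}, the corresponding half-wave Fourier integral operators $\mathcal A$ and $\mathcal B$ then map $L^2(\R^n)\to L^q(\R^{n+1}_+)$ and $L^2(\R^n)\to L^{p_0'}(\R^{n+1}_+)$, respectively (the latter because $s=\frac{1}{m+2}$ in Lemma~\ref{lem3.3} yields $q=q_0=p_0'$). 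By duality $\mathcal B^\ast\colon L^{p_0}(\R^{n+1}_+)\to L^2(\R^n)$, and hence $\mathcal A\mathcal B^\ast\colon L^{p_0}(\R^{n+1}_+)\to L^q(\R^{n+1}_+)$ is bounded. Summing the finitely many pieces yields $\|\tilde w\|_{L^q}\le C\||D_x|^{\gamma-1/(m+2)}F\|_{L^{p_0}}$. Finally, since $p_0<q_0\le q$, Lemma~\ref{lem3.2} applied in the $\tau$-variable (with the spatial variable as a passive parameter, a standard extension) replaces $\int_0^\infty$ by the retarded $\int_0^t$, giving the desired estimate for $w$.

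The main technical hurdle is the bookkeeping of the Riesz exponents $\kappa_A,\kappa_B$, which must be chosen so that both $\mathcal A$ and $\mathcal B$ fall within the scope of Lemma~\ref{lem3.3} simultaneously. The specific shift $-\frac{1}{m+2}$ appearing on $F$ emerges from the algebraic identity $\gamma+\frac{1}{m+2}-\frac{2}{m+2}=\gamma-\frac{1}{m+2}$, which balances the Sobolev index $\gamma$ required on the $\mathcal A$-side, the minimal admissible index $\frac{1}{m+2}$ on the $\mathcal B$-side (matching $q=p_0'$), and the $|\xi|^{-2/(m+2)}$-gain carried by the $V_2$-factor.
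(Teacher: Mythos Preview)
Your argument is correct, and it takes a genuinely different route from the paper's proof. Both begin with Duhamel's formula and the half-wave decomposition, and both invoke the Christ--Kiselev lemma at the end, but the mechanism for reaching the target exponent $q$ differs. The paper does not factor through $L^2$: it introduces the weighted operator $A^\alpha$ (with $\alpha=\gamma-\tfrac{1}{m+2}$), applies a Littlewood--Paley decomposition, and proves the dyadic bound \eqref{equ:3.40} at the two endpoints $q=q_0$ (by re-running the $TT^\ast$/Hardy--Littlewood--Sobolev argument of Lemma~\ref{lem3.3} and then applying Lemma~\ref{lem3.2} to the explicit scalar kernel $|t-\tau|^{-\frac{(m+2)n-2}{(m+2)n+2}}$) and $q=\infty$ (by the stationary-phase kernel bound \eqref{equ:3.43} and H\"older), finishing by interpolation. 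Your approach instead splits the Riesz weight between the two factors so that the $t$-side half-wave operator lands in Lemma~\ref{lem3.3} at $s=\gamma$ and the $\tau$-side at $s=\tfrac{1}{m+2}$; composing $\mathcal A$ with the dual $\mathcal B^\ast$ gives the untruncated bound directly, and Christ--Kiselev with $p_0<q$ restores the retarded integral. This avoids the separate $q=\infty$ kernel computation and the interpolation step, at the cost of invoking Lemma~\ref{lem3.3} at two different Sobolev indices and relying on the Banach-space-valued Christ--Kiselev lemma rather than the scalar version stated in Lemma~\ref{lem3.2}. One point to make explicit: what you are really citing is the half-wave estimate \eqref{equ:3.18}--\eqref{equ:3.19} established \emph{inside} the proof of Lemma~\ref{lem3.3} (valid for any amplitude obeying \eqref{equ:3.16}), not the statement of Lemma~\ref{lem3.3} itself, which refers to the full solution operator of \eqref{equ:3.2}.
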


\begin{proof}
It follows from problem \eqref{equ:3.3} that
\[
w(t,x)=\int_0^t\left(V_2(t, D_x)V_1(\tau, D_x)-V_1(t, D_x)V_2(\tau,
D_x)\right)F(\tau,x)\,d\tau.
\]
To estimate $w(t,x)$, it suffices to treat the term $\int_0^tV_2(t,
D_x)V_1(\tau, D_x)F(\tau,x)d\tau$ since the treatment on the term
$\int_0^tV_1(t, D_x)V_2(\tau, D_x)F(\tau,x)d\tau$ is completely
analogous. Choose a cut-off function $\chi$ as in
\eqref{equ:3.11}. Set
\begin{align*}
w_1(t,x)&=\int_0^t\chi(\phi(t)D_x)\chi(\phi(\tau)D_x)V_2(t,
D_x)V_1(\tau, D_x)F(\tau,x)\,d\tau, \\
w_2(t,x)&=\int_0^t\chi(\phi(t)D_x)\left(1-\chi(\phi(\tau)D_x)\right)
V_2(t, D_x)V_1(\tau, D_x)F(\tau,x)\,d\tau, \\
w_3(t,x)&=\int_0^t\left(1-\chi(\phi(t)D_x)\right)\chi(\phi(\tau)D_x)V_2(t,
D_x)V_1(\tau, D_x)F(\tau,x)\,d\tau, \\
w_4(t,x)&=\int_0^t\left(1-\chi(\phi(t)D_x)\right)
\left(1-\chi(\phi(\tau)D_x)\right)V_2(t, D_x)V_1(\tau, D_x)F(\tau,x)\,d\tau.
\end{align*}
Together with
\eqref{equ:3.7}-\eqref{equ:3.10}, as in the proof of
Lemma~\ref{lem3.3}, we can write $\sum_{j=1}^{4}w_j$ as
\begin{equation}\label{equ:3.36}
\sum_{j=1}^{4}w_j=(AF)(t,x)\equiv
\int_0^t\int_{\R^n}e^{i\left(x\cdot\xi+(\phi(t)-\phi(\tau))|\xi|\right)}
a(t,\tau,\xi)\hat{F}(\tau,\xi)\,d\xi d\tau,
\end{equation}
where $a(t,\tau,\xi)$ satisfies
\[
\bigl| \partial_\xi^\beta a(t,\xi)\bigr|\leq
C\left(1+\phi(t)|\xi|\right)^{-\f{m}{2(m+2)}}
\left(1+\phi(\tau)|\xi|\right)^{-\f{m}{2(m+2)}}|\xi|^{-\frac{2}{m+2}-|\beta|}.
\]
To treat $(AF)(t,x)$ conveniently, we introduce the more general
operator
\begin{equation}\label{equ:3.37}
(A^\alpha F)(t,x)=\int_0^t\int_{\R^n}e^{i\left(x\cdot\xi+(\phi(t)-\phi(\tau))|\xi|\right)}
a(t,\tau,\xi)\hat{F}(\tau,\xi)\frac{d\xi}{|\xi|^\alpha}\,d\tau,
\end{equation}
where $0\leq\alpha<\frac{n}{2}$ is a parameter.

As in the proof of Lemma~\ref{lem3.3}, we shall use the
Littlewood-Paley argument with a bump function $\beta$ as in
Lemma~\ref{lem3.1}.
Define the operator
\begin{equation}\label{equ:3.38}
  A^\alpha_j F(t,x)=\int_0^t\int_{\R^n}e^{i\left(x\cdot\xi+(\phi(t)-\phi(\tau))|\xi|\right)}
  \beta\left(\frac{|\xi|}{2^j}\right)a(t,\tau,\xi)
\hat{F}(\tau,\xi)\frac{d\xi}{|\xi|^\alpha} \, d\tau.
\end{equation}
Note that our aim is to establish the inequality, for $
\gamma=\frac{n}{2}-\frac{(m+2)n+2}{q(m+2)}$, $q_0\le q<\infty$,
\begin{equation*}
\| w\|_{L^q(\R^{n+1}_+)}\leq C\bigl\||D_x|^{\gamma-\frac{1}{m+2}}
F\bigr\|_{L^{p_0}}
\end{equation*}
which is equivalent to proving that
\begin{equation*}
\left\| |D_x|^{-\gamma+\frac{1}{m+2}}w\right\|_{L^q(\R^{n+1}_+)}\leq
C\left\| F\right\|_{L^{p_0}(\R^{n+1}_+)}.
\end{equation*}
In terms of the operator $A^\alpha$ in \eqref{equ:3.37} with
$\alpha=\gamma-\frac{1}{m+2}$, it suffices to establish
\begin{equation}\label{equ:3.39}
\| A^\alpha F\|_{L^q(\R^{n+1}_+)}\le C\left\| F\right\|_{L^{p_0}(\R^{n+1}_+)}.
\end{equation}
in order to complete the proof of \eqref{equ:3.34}.

Note that $p_0<2<q<\infty$. To derive \eqref{equ:3.39}, it follows
from Lemma~\ref{lem3.1} that we only need to prove
\begin{equation}\label{equ:3.40}
\| A^\alpha_j F\|_{L^q(\R^{n+1}_+)}\le C\left\| F\right\|_{L^{p_0}(\R^{n+1}_+)}.
\end{equation}
By interpolation, it suffices to prove that \eqref{equ:3.40} holds for
the special cases $q=q_0$ and $q=\infty$. Denote the corresponding
indices $\alpha$ by $\alpha_0$ and $\alpha_1$.  A direct computation
yields $\alpha_0=\frac{n}{2}-\frac{(m+2)n+2}{q_0(m+2)}
-\frac{1}{m+2}=0$ and $\al_1=\frac{n}{2}-\frac{1}{m+2}$. We now
treat $A^{\alpha_0}_j=A^{0}_j$. Let
\[
T_j^0G(t,\tau,x)=\int_{\R^n}
e^{i\left(x\cdot\xi+(\phi(t)-\phi(\tau))|\xi|\right)}
\beta\left(\frac{|\xi|}{2^j}\right)a(t,\tau,\xi)\hat{G}(\tau,\xi)\, d\xi.
\]
We can repeat the derivation of \eqref{equ:3.32} to get
\begin{equation}\label{equ:3.41}
\| T_j^0G(t,\tau,\cdot)\|_{L^{p_0'}(\R^n)}\leq
C\left|t-\tau\right|^{-\frac{(m+2)n-2}{(m+2)n+2}}\left\|
G(\tau,\cdot)\right\|_{L^{p_0}}.
\end{equation}
Note that $\ds A^{0}_j G(t,x)=\int_0^tT_j^0G(t,\tau,x)\,d\tau$. Then,
by \eqref{equ:3.41} and the Hardy-Littlewood-Sobolev inequality, we
get
\[
\left\|\int_{\R}\|
T_j^0G(t,\tau,x)\|_{L^{q_0}_x}\,d\tau\right\|_{L^{q_0}_t}\leq C\left\|
G\right\|_{L^{p_0}}.
\]
With
\begin{equation*}
K(t,\tau)=
\begin{cases}
|t-\tau|^{-\frac{(m+2)n-2}{(m+2)n+2}}, &\tau\geq0, \\
0, &\tau<0,
\end{cases}
\end{equation*}
it follows from Lemma~\ref{lem3.2} with $q=q_0$ that \eqref{equ:3.40}
has been obtained.

Next we prove \eqref{equ:3.40} for $q=\infty$.  In this case, the
kernel of $A^{\alpha_1}_j$ can be written as
\[
K^{\alpha_1}_j(t,x;\tau,y)=\int_{\R^n}\beta\left(\frac{|\xi|}{2^j}\right)
e^{i\left((x-y)\cdot\xi+(\phi(t)-\phi(\tau))|\xi|\right)}a(t,\tau,\xi)\,\frac{d\xi}{|\xi|^{\alpha_1}}.
\]
We now assert
\begin{equation}\label{equ:3.42}
\sup\limits_{t,x}\int_{\R^{n+1}_+}|K^{\alpha_1}_j(t,x;\tau,y)|^{q_0}\,d\tau dy<\infty.
\end{equation}
Obviously, if \eqref{equ:3.42} is true, then a direct application of
H\"{o}lder's inequality yields \eqref{equ:3.40} for
$q=\infty$.

Next we turn to the proof of \eqref{equ:3.42}. By
\cite[Lemma~7.2.4]{Sog}, we have
\begin{multline}\label{equ:3.43}
\left|K^{\alpha_1}_j(t,x;\tau,y)\right| \\ \leq \,
C_{N,n,\alpha_1}\lambda^{\frac{n+1}{2}-\bar{\alpha_1}}
\left(|\phi(t)-\phi(\tau)|+\lambda^{-1}\right)^{-\frac{n-1}{2}}
\left(1+\lambda\big||x-y|-|\phi(t)-\phi(\tau)|\big|\right)^{-N},
\end{multline}
where $\lambda=2^j$, $N=0,1,2,\ldots$, and
\[
\bar{\alpha_1}=\frac{2}{m+2}+\alpha_1=\frac{2}{m+2}+\frac{n}{2}-\frac{1}{m+2}
=\frac{n}{2}+\frac{1}{m+2}.
\]
It suffices to prove \eqref{equ:3.42} in case $x=0$. In fact, a direct
computation yields
\begin{multline*}
\int_{\R^{n+1}}|K^{\alpha_1}_j(t,0;\tau,y)|^{q_0}\,d\tau dy \\
\begin{aligned}
&\leq\int_{-\infty}^{\infty}\int_{\R^n}\lambda^{(\frac{n+1}{2}-\bar{\alpha_1})\cdot q_0}
  \left(|\phi(t)-\phi(\tau)|+\lambda^{-1}\right)^{-\frac{n-1}{2}\cdot q_0}
  \left(1+\lambda\bigl||y|-|\phi(t)-\phi(\tau)|\bigr|\right)^{-N}\,dsdy \\
&\leq C\int_{-\infty}^{\infty}\lambda^{\frac{m}{2(m+2)}\cdot q_0}
  \left(|\phi(t)-\phi(\tau)|+\lambda^{-1}\right)^{-\frac{n-1}{2}\cdot q_0}\lambda^{-1}
  \left(|\phi(t)-\phi(\tau)|+\lambda^{-1}\right)^{n-1}\,d\tau \\
&\leq C \int_{-\infty}^{\infty}\lambda^{\frac{m(m+2)n+2m}{(m+2)((m+2)n-2)}-1}
  \left(|t-\tau|+\lambda^{-\frac{2}{m+2}}\right)^{-\frac{2(n-1)(m+2)}{(m+2)n-2}}\,d\tau \\
&\leq C.
\end{aligned}
\end{multline*}
Thus, by interpolation, \eqref{equ:3.40} and then further \eqref{equ:3.34} are shown.
\end{proof}

Relying on Lemmas~\ref{lem3.3} and \ref{lem3.4}, we have:

\begin{lemma}\label{lem3.5}
Let $w$ solve \eqref{equ:3.3}. Then
\begin{equation}\label{equ:3.44}
  \| w\|_{L^q(\R^{n+1}_+)}+\left\||D_x|^{\gamma-\frac{1}{m+2}} w\right\|_{L^{q_0}(\R^{n+1}_+)}\leq
  C\left\||D_x|^{\gamma-\frac{1}{m+2}} F\right\|_{L^{p_0}(\R^{n+1}_+)},
\end{equation}
where $\gamma=\frac{n}{2}-\frac{(m+2)n+2}{q(m+2)}$, $q_0\leq q<\infty$,
and the constant $C$ only depends on $m$, $n$, and $q$.
\end{lemma}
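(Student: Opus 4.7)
The plan is to split inequality \eqref{equ:3.44} into its two left-hand pieces. The first piece, $\|w\|_{L^q(\R^{n+1}_+)}$, is controlled by $\||D_x|^{\gamma-\frac{1}{m+2}} F\|_{L^{p_0}(\R^{n+1}_+)}$ directly by Lemma~\ref{lem3.4}, so all the real work lies in the second piece $\||D_x|^{\gamma-\frac{1}{m+2}} w\|_{L^{q_0}(\R^{n+1}_+)}$.

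The key observation for the second piece is that the Fourier multiplier $|D_x|^\alpha$ (in the spatial variable) commutes with the generalized Tricomi operator $\p_t^2 - t^m \Delta$, since on the Fourier side the latter involves only $\p_t^2$ and the $t$-dependent spatial symbol $t^m|\xi|^2$. Hence, setting $\alpha := \gamma - \frac{1}{m+2} \geq 0$ (non-negativity follows from $q \geq q_0$, since $\gamma$ is increasing in $q$), if $w$ solves \eqref{equ:3.3} with source $F$ and zero Cauchy data, then $\tilde w := |D_x|^\alpha w$ solves \eqref{equ:3.3} with source $\tilde F := |D_x|^\alpha F$ and still zero Cauchy data.

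Now, at the conformal exponent $q = q_0 = \frac{2((m+2)n+2)}{(m+2)n-2}$, a direct computation gives $\gamma = \frac{1}{m+2}$, so the derivative loss in Lemma~\ref{lem3.4} vanishes and that lemma reduces to the clean inequality $\|w\|_{L^{q_0}} \leq C\|F\|_{L^{p_0}}$. Applying this zero-loss instance of Lemma~\ref{lem3.4} to the pair $(\tilde w, \tilde F)$ yields
\[
\||D_x|^{\gamma-\frac{1}{m+2}} w\|_{L^{q_0}(\R^{n+1}_+)} = \|\tilde w\|_{L^{q_0}} \leq C\|\tilde F\|_{L^{p_0}} = C\||D_x|^{\gamma-\frac{1}{m+2}} F\|_{L^{p_0}(\R^{n+1}_+)}.
\]
Adding this to the estimate for $\|w\|_{L^q}$ furnished by Lemma~\ref{lem3.4} immediately produces \eqref{equ:3.44}.

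The only delicate point is the rigorous identification of $\tilde w = |D_x|^\alpha w$ with the solution of \eqref{equ:3.3} driven by $\tilde F$, so that Lemma~\ref{lem3.4} can legitimately be invoked. This is straightforward from the explicit Fourier-multiplier representation of $w$ in terms of $V_1$, $V_2$ used in \eqref{equ:3.7}-\eqref{equ:3.8} (and underpinning both Lemmas~\ref{lem3.3} and~\ref{lem3.4}): conjugating that representation by $|D_x|^\alpha$ simply replaces the source $F$ by $|D_x|^\alpha F$, so no new harmonic analysis beyond the tools already developed for Lemmas~\ref{lem3.3} and~\ref{lem3.4} is required.
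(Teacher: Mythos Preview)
Your proof is correct and follows essentially the same route as the paper: commute $|D_x|^{\gamma-\frac{1}{m+2}}$ through the generalized Tricomi operator and apply Lemma~\ref{lem3.4} at $q=q_0$ (where the derivative loss vanishes) to control the second term, while the first term is bounded by Lemma~\ref{lem3.4} directly. Your extra paragraph justifying the commutation via the explicit $V_1,V_2$ representation is a nice addition, but no substantive difference from the paper's argument.
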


\begin{proof}
Note that
\[
\left(\partial_t^2-t^m \Delta\right)|D_x|^{\gamma-\frac{1}{m+2}} w=|D_x|^{\gamma-\frac{1}{m+2}} F.
\]
Then applying Lemma~\ref{lem3.4} with $q=q_0$ yields
\[
\left\||D_x|^{\gamma-\frac{1}{m+2}} w\right\|_{L^{q_0}}\leq
C\left\||D_x|^{\gamma-\frac{1}{m+2}} F\right\|_{L^{p_0}}.
\]
Together with Lemma~\ref{lem3.3}, this gives \eqref{equ:3.44}.
\end{proof}


\section{Proof of Theorem~\ref{thm1.2}}

Based on the results of Section~\ref{sec3}, here we shall prove
Theorem~\ref{thm1.2}.  To establish the existence of a global solution
of \eqref{equ:original}, we shall use the iteration scheme
\begin{equation}\label{equ:4.1}
\left\{ \enspace
\begin{aligned}
&\partial_t^2 u_k-t^m \Delta u_k =|u_{k-1}|^p,   \\
&u_k(0,\cdot)=u_0(x), \quad \partial_{t} u_k(0,\cdot)=u_1(x),
\end{aligned}
\right.
\end{equation}
where $u_{-1}\equiv0$.

\begin{proof}[Proof of Theorem~\ref{thm1.2}]

We divide the proof into two parts.

\subsubsection*{Part 1. \ \boldmath $p_{\conf}(m,n)\leq p\leq
  \frac{(m+2)(n-2)+6}{(m+2)(n-2)-2}$.}

We will show that there is a solution $u\in L^r(\R^{n+1}_+)$ of
\eqref{equ:original} with
$r=\left(\frac{m+2}{2}n+1\right)\frac{p-1}{2}$ such that
$u_k\rightarrow u$ and $|u_k|^p\rightarrow |u|^p$ in
$\mathcal{D}'(\R^{n+1}_+)$ as $k\to\infty$.

We have that $
\frac{1}{m+2}\leq\gamma=\frac{n}{2}-\frac{(m+2)n+2}{r(m+2)}\leq
1+\frac{1}{m+2}$ (using $r\ge q_0$). Set
\begin{equation}\label{equ:4.2}
  M_k=\| u_k\|_{L^r(\R^{n+1}_+)}
  +\left\||D_x|^{\gamma-\frac{1}{m+2}}u_k\right\|_{L^{q_0}(\R^{n+1}_+)}.
\end{equation}
Suppose that we have already shown that, for $l=1,2,\dots,k$,
\begin{equation}\label{equ:4.3}
M_l\leq 2M_0\leq C\epsilon_0.
\end{equation}
Then we prove that \eqref{equ:4.3} also holds for $l=k+1$. Applying
Lemma~\ref{lem3.4} to the equation
\[
\left(\partial_t^2-t^m \Delta\right)(u_{k+1}-u_0) =F(u_k),
\]
where $F(u_k)=|u_{k}|^p$, we arrive at
\begin{equation}\label{equ:4.4}
\begin{aligned}
  M_{k+1}&\leq C\,\bigl\||D_x|^{\gamma-\frac{1}{m+2}} (F(u_k))\bigr\|_{L^{p_0}(\R^{n+1}_+)}
  +M_0 \\
&\leq C\,\| F'(u_k)\|_{L^{\frac{(m+2)n+2}{4}}(\R^{n+1}_+)}
\bigl\||D_x|^{\gamma-\frac{1}{m+2}} u_k\bigr\|_{L^{q_0}(\R^{n+1}_+)}+M_0 \\
&\leq C\| F'(u_k)\|_{L^{\frac{(m+2)n+2}{4}}(\R^{n+1}_+)}M_k+M_0.
\end{aligned}
\end{equation}
We mention that in this computation the following Leibniz's rule for
fractional derivatives has been used (see \cite{Chr1, Chr2} for
details):
\begin{equation}\label{equ:4.5}
\bigl\||D_x|^{\gamma-\frac{1}{m+2}}F(u)(s,\cdot)\bigr\|_{L^{p_1}(\R^n)} \leq\|
F'(u)(s,\cdot)\|_{L^{p_2}(\R^n)}\||D_x|^{\gamma-\frac{1}{m+2}}u(s,\cdot)\|_{L^{p_3}(\R^n)},
\end{equation}
where $\frac{1}{p_1}=\frac{1}{p_2}+\frac{1}{p_3}$ with $p_i\ge 1$
($1\le i\le 3$) and $0\leq\gamma-\frac{1}{m+2}\leq 1$. Moreover, it
follows from H\"{o}lder's inequality that
\begin{equation}\label{equ:4.6}
\| F'(u_k)\|_{L^{\frac{(m+2)n+2}{4}}(\R^{n+1}_+)}
\leq C\,\| u_k\|_{L^r(\R^{n+1}_+)}^{p-1}\leq CM_k^{p-1}\leq C(2M_0)^{p-1}.
\end{equation}
Thus, if $M_0\le C\epsilon_0$ and $\epsilon_0$ is so small that
\[
C(2M_0)^{p-1}\leq\tilde{C}\epsilon_0^{p-1}\leq\frac{1}{2},
\]
then we have
\[
M_{k+1}\leq\frac{1}{2}M_k+M_0\leq 2M_0.
\]
Next we estimate $M_0$. By Lemma~\ref{lem3.3}, we have that
\begin{equation}\label{equ:4.7}
M_0\leq C\left(\| f\|_{\dot{H}^s(\R^n)}+\| g\|_{\dot{H}^{s-\frac{2}{m+2}}(\R^n)}\right)
\le C\epsilon_0,
\end{equation}
where $s=\frac{n}{2}-\frac{(m+2)n+2}{(m+2)r}$ and $q_0\leq
r<\infty$. Therefore, we have obtained the uniform boundedness of
$\{M_k\}$.

Next we show that the sequence $\{u_k\}$ is convergent under the
weaker norm $\| \cdot\|_{L^{q_0}(\R^{n+1}_+)}$.  Set $N_k=\|
u_k-u_{k-1}\|_{L^{q_0}(\R^{n+1}_+)}$. Then
\begin{equation}\label{equ:4.8}
\begin{aligned}
N_{k+1}&=\| u_{k+1}-u_{k}\|_{L^{q_0}(\R^{n+1}_+)} \leq\| F(u_{k})-F(u_{k-1})\|_{L^{p_0}(\R^{n+1}_+)} \\
&\leq \bigl(\| u_k\|_{L^r(\R^{n+1}_+)}+\| u_{k-1}\|_{L^r(\R^{n+1}_+)}\bigr)^{p-1}
\| u_{k}-u_{k-1}\|_{L^{q_0}(\R^{n+1}_+)} \\
&\leq(M_k+M_{k-1})^{p-1}\| u_{k}-u_{k-1}\|_{L^{q_0}(\R^{n+1}_+)}
\leq C\epsilon_0^{p-1}\| u_{k}-u_{k-1}\|_{L^{q_0}(\R^{n+1}_+)} \\
&\leq\frac{1}{2}\,\| u_{k}-u_{k-1}\|_{L^{q_0}(\R^{n+1}_+)}
=\frac{1}{2}N_k
\end{aligned}
\end{equation}
Therefore, $u_k\rightarrow u$ in $L^{q_0}(\R^{n+1}_+)$ and hence in
$\mathcal{D}'(\R^{n+1}_+)$. This yields that there exists a subsequence, which is
still denoted by $\{u_k\}$, such that $u_k\rightarrow u$ a.e.  In
view of $\| u_k\|_{L^r(\R^{n+1}_+)}\le 2M_0$, it follows from Fatou's
lemma that
\[
\| u\|_{L^r(\R^{n+1}_+)}\leq\liminf_{k\rightarrow\infty}
\| u_k\|_{L^r(\R^{n+1}_+)}\leq2M_0\leq C\epsilon_0<\infty.
\]
It remains to prove that $F(u_k)\rightarrow F(u)$ in
$\mathcal{D}'(\R^{n+1}_+)$ in order to show that $u$ is a solution of
\eqref{equ:original}. In fact, for any fixed compact set
$K\Subset\R^{n+1}_+$, one has
\begin{equation}\label{equ:4.9}
\begin{aligned}
\| F(u_k)-F(u)\|_{L^1(K)}
& \leq C_K\| F(u_k)-F(u)\|_{L^{p_0}(K)} \\
& \leq C_K(\| u_k\|_{L^r(\R^{n+1}_+)}+\| u\|_{L^r(K)})^{p-1}\| u_k-u\|_{L^{q_0}(K)} \\
\leq&\tilde{C_K}\epsilon_0^{p-1}\| u_k-u\|_{L^{q_0}(K)}\rightarrow 0 \quad as \quad k\rightarrow\infty.
\end{aligned}
\end{equation}
Thus $|u_k|^p\rightarrow |u|^p$ in $L^1_{loc}(\R^{n+1}_+)$ and hence in $\mathcal{D}'(\R^{n+1}_+)$.

The proof of Part 1 is complete.

\subsubsection*{Part 2. \ \boldmath $p\ge\frac{(m+2)(n-2)+6}{(m+2)(n-2)-2}$,
  $p$ is an integer, and $|u^p|$ in \eqref{equ:original} is replaced
  with $\pm u^p$.}

We will show that there is a solution $u\in L^r(\R^{n+1}_+)$
of \eqref{equ:original} with $
r=\left(\frac{m+2}{2}n+1\right)\frac{p-1}{2}$ such that
$u_k\rightarrow u$ and $u_k^p\rightarrow u^p$ in
$\mathcal{D}'(\R^{n+1}_+)$ as $k\to\infty$.

We have that $\gamma=\frac{n}{2}-
\frac{(m+2)n+2}{(m+2)r}>1+\frac{1}{m+2}$. Let
\begin{equation}\label{equ:4.10}
M_k=\sup\limits_{q_0\leq q\leq r}\left\||D_x|^{\frac{(m+2)n+2}{q(m+2)}-\frac{2}{m+2}
\cdot\frac{2}{p-1}}u_k\right\|_{L^q(\R^{n+1}_+)}.
\end{equation}
Applying Lemma~\ref{lem3.4} to the equation
\[
\left(\partial_t^2-t^m \Delta\right)(u_{k+1}-u_0)=|u_k|^p
\]
yields
\begin{equation}\label{equ:4.11}
M_{k+1}\leq M_0+C_p\left\||D_x|^{\frac{n}{2}-\frac{1}{m+2}-\frac{2}{m+2}
\cdot\frac{2}{p-1}}|u_k|^p\right\|_{L^{p_0}(\R^{n+1}_+)}.
\end{equation}
To treat the second summand on the right-hand side of \eqref{equ:4.11}, we need the
following variant of \eqref{equ:4.5} (see \cite{Kpv} for details):
\begin{equation} \label{equ:4.12}
\left\||D_x|^\sigma (fg)\right\|_{L^p}\leq C\left\||D_x|^\sigma f\right\|_{L^{r_1}}
\| g\|_{L^{r_2}}+C\| f\|_{L^{s_1}}\left\||D_x|^\sigma g\right\|_{L^{s_2}},
\end{equation}
where $0\leq\sigma\leq1$, $1<r_j, s_j<\infty$, and $
\frac{1}{p}=\frac{1}{r_1}+\frac{1}{r_2}=\frac{1}{s_1}+\frac{1}{s_2}$.

By \eqref{equ:4.12} together with the fact that, for a given
multi-index $\alpha$ and $1<p<\infty$,
\[
\left\| D_x^\alpha f\right\|_{L^p}\leq
C_{p,\alpha}\left\||D_x|^{|\alpha|} f\right\|_{L^p},
\]
we arrive at
\[
\left\||D_x|^{\frac{n}{2}-\frac{1}{m+2}-\frac{2}{m+2}
  \,\frac{2}{p-1}}\left(|u_k|^p\right)\right\|_{L^{p_0}(\R^{n+1}_+)}
\le
C\,\prod\limits^p_{j=1}\left\||D_x|^{\alpha_j}u_k
\right\|_{L^{q_j}(\R^{n+1}_+)},
\]
where $0\leq\alpha_j\leq\frac{n}{2}-\frac{1}{m+2}-\frac{2}{m+2}
\,\frac{2}{p-1}$ and
\begin{equation}\label{equ:4.13}
  \sum\limits^p_{j=1}\alpha_j=
  \frac{n}{2}-\frac{1}{m+2}-\frac{2}{m+2}\,\frac{2}{p-1}.
\end{equation}
Let $q_0\leq q_j<\infty$ satisfy
\begin{equation}\label{equ:4.14}
\sum\limits^p_{j=1}\frac{1}{q_j}=\frac{1}{p_0},
\end{equation}
where $q_j$ is determined by
\[
\frac{(m+2)n+2}{q_j(m+2)}-\frac{2}{m+2}\,\frac{2}{p-1}=\alpha_j.
\]
From this, we have
\[
q_0\leq q_j\leq\frac{(m+2)n+2}{4}\left(p-1\right)
\]
and
\begin{equation}\label{equ:4.15}
\begin{aligned}
\sum\limits^p_{j=1}\frac{1}{q_j}&=\frac{m+2}{(m+2)n+2}\sum\limits^p_{j=1}
\left(\alpha_j +\frac{2}{m+2}\cdot\frac{2}{p-1}\right)\\
&=\frac{m+2}{(m+2)n+2}\left(\frac{n}{2}-\frac{1}{m+2}
-\frac{2}{m+2}\,\frac{2}{p-1}
+\frac{2p}{m+2}\,\frac{2}{p-1}\right) \\
&=\frac{1}{p_0}.
\end{aligned}
\end{equation}
Thus one has from \eqref{equ:4.11} that
\[
M_{k+1}\leq M_0+C_pM_k^p.
\]
Suppose that $M_k\leq2M_0\leq C\epsilon_0$ holds. Then
\[
M_{k+1}\leq M_0+C_p(2M_0)^{p-1}M_k\leq M_0+\tilde{C}_p\epsilon_0^{p-1}M_k.
\]
If $\epsilon_0>0$ is so small that $\tilde{C}_p\epsilon_0^{p-1}\leq1/2$, then
\[
M_{k+1}\leq M_0+\frac{1}{2}M_k\leq M_0+\frac{1}{2}\cdot2M_0=2M_0.
\]
Thus, we have obtain the uniform boundedness of the $M_k$
provided that $M_0\leq C\epsilon_0$.

Furthermore, we then have that, if $N_k$ is defined as in \eqref{equ:4.8},
\begin{equation*}
\begin{aligned}
N_{k+1}&=\| u_{k+1}-u_{k}\|_{L^{q_0}(\R^{n+1}_+)} \\
&\leq\left\| |u_{k}|^p- |u_{k-1}|^p\right\|_{L^{p_0}(\R^{n+1}_+)} \\
&\leq\left(\| u_k\|_{L^r(\R^{n+1}_+)}+\| u_{k-1}\|_{L^r(\R^{n+1}_+)}
\right)^{p-1}\| u_{k}-u_{k-1}\|_{L^{q_0}(\R^{n+1}_+)} \\
&\leq\left(\sup\limits_{q_0\leq q\leq r}\left\||D_x|^{\frac{(m+2)n+2}{q(m+2)}-\frac{2}{m+2}
\cdot\frac{2}{p-1}}u_k\right\|_{L^q(\R^{n+1}_+)} \right. \\
&\left. \qquad +\sup\limits_{q_0\leq q\leq r}\left\||D_x|^{\frac{(m+2)n+2}{q(m+2)}
  -\frac{2}{m+2}\cdot\frac{2}{p-1}}u_{k-1}\right\|_{L^q(\R^{n+1}_+)}\right)^{p-1}
\| u_{k}-u_{k-1}\|_{L^{q_0}(\R^{n+1}_+)} \\
&\leq\left(M_k+M_{k-1}\right)^{p-1}\| u_{k}-u_{k-1}\|_{L^{q_0}(\R^{n+1}_+)} \\
&\leq C\epsilon_0^{p-1}\,\| u_{k}-u_{k-1}\|_{L^{q_0}(\R^{n+1}_+)} \\
&\leq\frac{1}{2}\,\| u_{k}-u_{k-1}\|_{L^{q_0}(\R^{n+1}_+)} =\frac{1}{2}N_k.
\end{aligned}
\end{equation*}
Thus, $u_k\rightarrow u$ in $L^{q_0}(\R^{n+1}_+)$ as $k\to\infty$.
From here we can finish the proof of Part 2 as in Part 1.

\bigskip

Part~1 and Part~2 jointly constitute the proof of Theorem~\ref{thm1.2}.
\end{proof}



\end{document}